 \newcommand*{\Scale}[2][4]{\scalebox{#1}{$#2$}}
\tikzset{inner sep=0pt, 
  root/.style={circle,draw,minimum size=7pt,thick}, 
  fatroot/.style={circle,draw,minimum size=10pt,thick}, 
  short root/.style={circle,fill,minimum size=7pt}, 
  doublearrow/.style={postaction={decorate}, 
  decoration={markings,mark=at position .7
  with {\arrow{angle 60}}},double distance=3pt,thick}
} 
\theoremstyle{plain}
\newtheorem{lemma}[algorithm]{Lemma}
\newtheorem{proposition}[algorithm]{Proposition}
\newtheorem{corollary}[algorithm]{Corollary}
\newtheorem{definition}[algorithm]{Definition}
\theoremstyle{remark}
\newtheorem{remark}[algorithm]{Remark}
\numberwithin{equation}{section}
\numberwithin{paragraph}{section}
\DeclareMathOperator{\Hom}{Hom}
\DeclareMathOperator{\Ad}{Ad}
\DeclareMathOperator{\Lie}{Lie}
\DeclareMathOperator{\Sp}{Sp}
\DeclareMathOperator{\diag}{diag}
\newcommand{\cF}{{\mathcal F}}
\newcommand{\frb}{{\mathfrak b}}
\newcommand{\frg}{{\mathfrak g}}
\newcommand{\frl}{{\mathfrak l}}
\newcommand{\frs}{{\mathfrak s}}
\newcommand{\frt}{{\mathfrak t}}
\newcommand{\GL}{\mathrm{GL}}
\newcommand{\PGL}{\mathrm{PGL}}
\newcommand{\SO}{\mathrm{SO}}
\newcommand{\SL}{\mathrm{SL}}
\newcommand{\al}{\alpha}
\newcommand{\be}{\beta}
\newcommand{\del}{\delta}
\newcommand{\varep}{\varepsilon}
\newcommand{\frakg}{\mathfrak g}
\newcommand{\frakt}{\mathfrak t}
\newcommand{\la}{\langle}
\newcommand{\ra}{\rangle}
\newcommand{\bR}{\mathbb{R}}
\newcommand{\bZ}{\mathbb{Z}}
\newcommand{\bG}{\mathbf{G}} 
\newcommand{\bT}{\mathbf{T}} 
\newcommand{\bN}{\mathbf{N}} 
\newcommand{\bB}{\mathbf{B}}
\newcommand{\bC}{\mathbb{C}}
\newcommand{\bA}{\mathbb{A}}
\newcommand{\bO}{\mathbb{O}}
\DeclareMathOperator{\Ht}{ht} 
\author{Beth Romano \and Jack A. Thorne}
\title{An LLL algorithm with symmetries}
\begin{document}

\maketitle

\begin{abstract}
We give a generalisation of the Lenstra--Lenstra--Lov\'asz (LLL) lattice-reduction algorithm that is valid for an arbitrary (split, semisimple) reductive group $\bG$. This can be regarded as `lattice reduction with symmetries'. We make this algorithm explicit for the classical groups $\bG = \Sp_{2g}$, $\SO_{2g}$, and for the exceptional group $\bG = G_2$. 
\end{abstract}


\section{Introduction}

Fix an integer $g \geq 1$. A lattice of rank $g$ is a free $\bZ$-module $\Lambda$ of rank $g$ together with an inner product $H$ on $\Lambda_\bR = \Lambda \otimes_\bZ \bR$. Lattice reduction refers to the problem of finding a $\bZ$-basis $b_1, \dots, b_g$ for $\Lambda$ that is optimal, in some sense. For example, a basis is said to be Minkowski reduced if for each $i = 1, \dots, n$, $(b_i, b_i)_H$ is minimal among vectors such that $b_1, \dots, b_{i-1}, b_i$ can be extended to a $\bZ$-basis of $\Lambda$. Finding a Minkowski-reduced basis is `hard': even finding $b_1$ (i.e.\ a shortest vector in $\Lambda$) is conjectured to be NP-hard \cite{Emd81, Mic02}.

Lattice reduction can be rephrased geometrically. Let us restrict to lattices $\Lambda$ that have covolume $\operatorname{covol}(\Lambda) = 1$. Giving a basis for a lattice $\Lambda$ is equivalent to giving an isomorphism $\Lambda \cong \bZ^g$, so we may as well fix $\Lambda = \bZ^g$ and consider only the set of inner products on $\bR^g$ such that $\bZ^g$ has covolume 1. The set of such inner products may be identified with the symmetric space $X_{\SL_g(\bR)} = \SO_g(\bR) \backslash \SL_g(\bR)$. There is a (right) action of the group $\SL_g(\bZ)$ on $X_{\SL_g(\bR)}$; under the given identification, acting by $\gamma \in \SL_g(\bZ)$ corresponds to making a change of basis (therefore replacing the Gram matrix $H$ by ${}^t \gamma H \gamma$).

The subset $\cF_g \subset X_{\SL_g(\bR)}$ corresponding to those inner products for which the standard basis of $\bZ^g$ is Minkowski reduced is a fundamental domain for the action of $\SL_g(\bZ)$, in the sense that the $\SL_g(\bZ)$-translates of $\cF_g$ cover $X_{\SL_g(\bR)}$, and no two distinct points in the interior of $\cF_g$ are $\SL_g(\bZ)$-conjugate. The problem of Minkowsi reduction, given a based lattice $(\Lambda, (b_i)_{i=1, \dots, g})$ corresponding to a point $x \in X_{\SL_g(\bR)}$, corresponds to the problem of finding an element $\gamma \in \SL_g(\bZ)$ such that $ x\gamma \in \cF_g$. 

We can give more efficient algorithms to find `optimal' bases if we relax the notion of optimality. The Lenstra--Lenstra--Lov\'asz (LLL) algorithm \cite{Len82} is a lattice reduction algorithm that, for any fixed $\delta \in (1/4, 1)$, returns a basis $b_1, \dots, b_g$ that is ($\delta$-)LLL reduced, in the following sense:
\begin{definition}\label{defn_standard_LLL}
Let $\del \in (1/4, 1)$, and let $H$ be an inner product on $\bR^g$. We say that $H$ is \emph{$\del$-reduced} (with respect to the standard basis $e_1, \dots, e_g$ of $\bZ^g$) if it satisfies the following conditions: first, let $e_1^\ast, \dots, e^\ast_g$ be the basis of $\bR^g$ obtained from applying Gram--Schmidt orthogonalisation to the basis $e_1, \dots, e_g$ with respect to the inner product $H$, therefore defined by the formulae
\[ e_j^\ast = e_j - \sum_{i=1}^{j-1} \mu_{i, j} e_i^\ast, \]
\[ \mu_{i ,j } = (e_j, e_i^\ast)_H / (e_i^\ast, e_i^\ast)_H \quad (1 \leq i < j \leq g). \]
Then:
\begin{enumerate} 
\item The basis $(e_i^\ast)_{i = 1}^g$ is size reduced: $| \mu_{i, j} | \leq 1/2$ for all $ 1\leq i < j \leq g$.
\item The basis $(e_i^\ast)_{i = 1}^g$ satisfies the Lov\'asz condition for each $i =1, \dots, g-1$: 
\begin{equation*}
(e_{i}^\ast, e_{i}^\ast)_H / (e_{i+1}^\ast, e_{i+1}^\ast)_H \leq (\delta - \mu_{i, i+1}^2)^{-1}.
\end{equation*}
\end{enumerate} 
\end{definition}
Although an LLL-reduced basis need not be Minkowski-reduced, it is close to optimal, for example in the sense that the lengths of the vectors in an LLL-reduced basis may be bounded in terms of the Minkowski successive minima of the lattice $(\bZ^g, ( \cdot, \cdot)_H)$ \cite[p. 518]{Len82}. It is also fast: for integer lattices (i.e. when $H$ has integer entries), it runs in polynomial time \cite[Proposition 1.26]{Len82}. As such, the LLL algorithm has had a very large number of applications in computational number theory (see e.g.\ \cite{Ngu10} for a detailed survey). 

The goal of this paper is to give a generalisation of the LLL lattice-reduction algorithm, where $\SL_g$ is replaced by an arbitrary (split, semisimple) reductive group $\bG$. This can be  regarded as `lattice reduction with symmetries'. We will define what it means for a point of $X_G = K \backslash \bG(\bR)$ (where $K$ is a maximal compact subgroup of $\bG(\bR)$) to be reduced, and give an algorithm that, given $x \in X_G$, produces $\gamma \in \bG(\bZ)$ such that $x \gamma$ is reduced. When $\bG = \SL_g$, our algorithm specialises to the one given in \cite{Len82}. 

Our initial motivation came from the paper \cite{Thorne}, where the second author defined a reduction covariant for representations $(\bG, V)$ arising from stably graded Lie algebras: more precisely, a $\bG(\bR)$-equivariant map 
\[ \mathcal{R} : V^s (\bR)\to X_\bG \]
from the (open, non-empty) set of stable vectors in $V(\bR)$ to the associated symmetric space. We can define an element of $V^s (\bR)$ to be reduced if its reduction covariant is reduced in the sense given in this paper, thus generalising the notion of a reduced genus-one normal curve in \cite{Cre10}.

In \cite{Thorne} we studied in more detail the representation of $\SO_{2g+1}$ associated to the stable $\bZ / 2 \bZ$-grading of a Lie algebra of type $A_{2g}$, which can be used to study 2-descent for the Jacobians of odd hyperelliptic curves of genus $g$, and gave an analogue of the LLL algorithm for the group $\SO_{2g+1}(\bZ)$. This was the impetus for the general algorithm given in this paper. We expect that the ideas developed in this paper will have applications (at least) to 2-descent for all of the families of curves associated to stably $\bZ / 2 \bZ$-graded Lie algebras in \cite{Tho13}. By way of illustration, we give an example related to 2-descent for even hyperelliptic curves of genus $g$ (and therefore the stable $\bZ / 2 \bZ$-grading of $\SL_{2g+2}$) in \S \ref{sec_case_SO_2g} below.

We will make our algorithm explicit in several cases. As already remarked, if $\bG = \SL_g$, then our algorithm (and definition of reducedness) is exactly the one given by Lenstra--Lenstra--Lov\'asz \cite{Len82}. The next case we consider is when $\bG = \Sp_{2g}$ is the symplectic group acting on $\bR^{2g}$. The associated symmetric space $X_{\Sp_{2g}}$ can then be identified with the Siegel upper half-space (in other words, the moduli space of based principally polarised abelian varieties of dimension $g$). This case is of interest as the reduction of period matrices of abelian varieties is useful for the evaluation of theta functions. In this case, our algorithm is essentially the one given in \cite{Dec04}, as a `cross product' of the LLL algorithm and the procedure given in \cite{Sie89}. (See also \cite{Gam06}.) We then go on to consider in more detail the cases of $\bG = \SO_{2g}$ and $\bG = G_2$, where our algorithm appears to be new. 

There are several directions in which one could hope to generalise our results further. We point out two natural questions. First, one could consider reduction with respect to the action of a not-necessarily-maximal subgroup $\Gamma \leq \bG(\bZ)$. It would be interesting to clarify the class of arithmetic groups $\Gamma$ to which our method applies (beginning with the group used in \cite{Thorne, Bha13}, which is \emph{not} the set of $\bZ$-points of a reductive group scheme).
Second, one could consider groups $\bG$ over $\bZ$ that are not necessarily split over $\bR$ (for example, special orthogonal groups that, over $\bR$, are not of signature $(g, g)$ or $(g, g+1)$). 

\subsection{Structure of the paper}

We begin with a split semisimple group $\bG$ over $\bZ$, and let $G = \bG(\bR)$. In \S \ref{s-set-up} we review some of the relevant theory of semisimple groups and define the relevant symmetric space $X_G$ that is the setting for our algorithm. In \S \ref{subsec_setup_in_case_SL_g}, we discuss the case when $\bG = \SL_g$ in terms of our more general set up. Motivated by this case, in \S \ref{s-reduced} we generalise to our general $\bG$ the definition of $\del$-reduced from the introduction.

In order to describe the general algorithm, we must understand the action of (lifts of) simple reflections on $X_G$: we describe their action explicitly in \S \ref{s-reflection}. Finally, in \S \ref{s-algorithm} we describe the general algorithm. 
In the subsequent sections, we make the algorithm explicit in the cases $\bG = \Sp_{2g}$ (\S \ref{sec_case_Sp_2g}), $\bG = \SO_{2g}$ (\S \ref{sec_case_SO_2g}), and $\bG = G_2$ (\S \ref{sec_case_G_2}). In the appendix, we describe the size-reduction step of the algorithm for a general group $\bG$.

\subsection{Notation}

We will generally use bold-face letters $\bG$ to denote group schemes over the integers, reserving Roman letters $G = \bG(\bR)$ for their groups of real points and gothic letters $\mathfrak{g} = \Lie \bG$ for their Lie algebras. Thus if $\bG = \SL_g$ then $G = \SL_g(\bR)$ and $\frg \leq M_n(\bZ)$ is the $\bZ$-Lie subalgebra of matrices of trace 0. We will consider $G$ as a real Lie group, endowed with its real topology, and write $G^\circ$ for the subgroup given by the connected component of the identity. We write $\bG_m$ for the multiplicative group and $\bG_a$ for the additive group. We will often refer to an inner product on $\bR^g$ by its Gram matrix $H$ with respect to the standard basis of $\bR^g$. We write the corresponding bilinear map on $\bR^g \times \bR^g$ as $(\cdot, \cdot)_H$.

\section{The general reduction algorithm}\label{s-general}

In this paper, we consider a semisimple Lie group $G$, its associated symmetric space $X_G$, and the action of an arithmetic group $\Gamma \leq G$ on $X_G$. We find the theory of reductive group schemes over $\bZ$ to be the most appropriate tool to do this. Having introduced a reductive group scheme $\bG$ over $\bZ$, we can take $\Gamma = \bG(\bZ)$, $G = \bG(\bR)$, and study the interplay between the action of elements of $\bG(\bZ)$ and the symmetric space $X_G$ using the usual structure theory. An excellent recent reference for this theory is the article \cite{Con14}.

Let us therefore take $\bG$ to be a split semisimple group over $\bZ$, with split maximal torus $\bT \leq \bG$. (For example, if $\bG = \SL_g$, then we may take $\bT$ to be the subgroup of diagonal matrices in $\bG$.) We will first recall some of the  structure theory of the pair $(\bG, \bT)$, and explain how the steps of the LLL lattice-reduction algorithm can be seen in these terms in the case $\bG = \SL_g$. We will then explain the algorithm for a general $\bG$.

\subsection{Set-up}\label{s-set-up}

We now review some of the structure theory of split semisimple groups. For futher details, see, e.g., \cite{Steinberg}.
Let $X^*(\bT) = \Hom(\bT, \bG_m)$ be the character group of $\bT$, and let $X_*(\bT) = \Hom(\bG_m, \bT)$ be its cocharacter group. The Lie algebra $\frg = \Lie \bG$ is a finite free $\bZ$-module that is semisimple as a representation of $\bT$, so we have a decomposition
\[ \frg = \frt \oplus \bigoplus_{\alpha \in X^\ast(\bT) - \{ 0 \}} \frg_\alpha, \]
where $\frg_\alpha$ denotes the eigenspace of the character $\alpha : \bT \to \bG_m$.
(This is the Cartan decomposition of the pair $(\bG, \bT)$.)
If an eigenspace $\frg_\alpha$ is non-zero, then it has rank 1. Characters for which $\frg_\alpha$ is non-zero are called roots, and we write $\Phi \subset X^\ast(\bT)$ for the set of roots.

Let $\al \in \Phi$, and let $X_\alpha \in \frg_\alpha$ be a basis vector (determined up to sign). Then there exists a unique pair $(Y_{\alpha}, \check{\alpha})$ consisting of a basis vector $Y_{\alpha} \in \frg_{-\alpha}$ and a cocharacter $\check{\alpha} \in X_\ast(\bT)$ such that $[X_\alpha, Y_{\alpha}] = d \check{\alpha}(1)$ and $\alpha \circ \check{\alpha}(t) = t^2$. (The element $\check{\alpha}$ is the coroot corresponding to $\alpha$.)  Further, there is a unique homomorphism $\phi_\alpha : \SL_2 \to \bG$ satisfying
\[ d \phi_\alpha\left( \begin{array}{cc} 0 & 1 \\ 0 & 0 \end{array}\right) = X_\alpha, \,\, d \phi_\alpha\left( \begin{array}{cc} 0 & 0 \\ 1 & 0 \end{array}\right) = Y_\alpha, \text{ and } \phi_\alpha\left( \begin{array}{cc} t & 0 \\ 0 & 1/t \end{array}\right) = \check{\alpha}(t).  \]
We write $u_\alpha : \bG_a \to \bG$ for the homomorphism defined by 
\[ u_\alpha(u) = \phi_\alpha \left( \begin{array}{cc} 1 & u \\ 0 & 1 \end{array}\right). \]
We refer to the image of $\phi_\al$ as a root $\SL_2$ and the image of $u_\al$ as a root group. Note that $\bT$ acts on each root group by conjugation, with $xu_\al(u)x^{-1} = u_\al(\al(x)u)$.

We define $s_\alpha = \phi_\alpha \begin{psmallmatrix} 0 & 1 \\ -1 & 0 \end{psmallmatrix} \in \bG(\bZ)$. This element normalises $\bT$, and its action on $\bT$ is given by the formula $s_\alpha x s_\al^{-1} =x / \check{\alpha}(\alpha(x))$.
The induced action on $X^\ast(\bT)$ is a reflection preserving $\Phi$. We also write $s_\al$ for the induced maps on $X^\ast(\bT)$ and $\Phi$. We remark that the element $s_\alpha \in \bG(\bZ)$ depends on the the choice of root vector $X_\alpha$, itself determined up to sign; the other choice would replace $s_\alpha$ by $\check{\alpha}(-1) s_\alpha$.

Let $G = \bG(\bR)$, $T = \bT(\bR)$, and let $A = T^\circ$ denote the connected component of the identity in $T$. We next describe the Iwasawa decomposition of $G$.
For more details see, e.g., \cite[Chapter IX]{Helgason} or \cite[Chapter VI]{Knapp}.
We first define a maximal unipotent subgroup $N$ of $G$. To do so, we choose a root basis $\Delta \subset \Phi$, which determines a system of positive roots $\Phi^+ \subset \Phi$. 
(Thus every element of $\Phi^+$ can be expressed as a $\bZ$-linear combination of elements of $\Delta$ with non-negative coefficients.) Then there is a unique (smooth, connected) subgroup $\bN \leq \bG$ whose Lie algebra equals $\bigoplus_{\alpha \in \Phi^+} \frg_\alpha$, and we take $N = \bN(\bR)$.

We next define a maximal compact subgroup $K$ of $G$.
Giving a maximal compact subgroup of $G$ is equivalent to giving a Cartan involution $\theta$ of $\bG_\bR$, i.e.\ an involution $\theta : \bG_\bR \to \bG_\bR$ such that the subgroup $\{ g \in \bG(\bC) \mid \theta(g) = \overline{g} \}$ is a maximal compact subgroup of $\bG(\bC)$. The associated maximal compact subgroup of $G = \bG(\bR)$ is then the fixed-point subgroup $G^\theta$. 
\begin{proposition}\label{prop_special_Cartan_involution} 
There is a unique involution $\theta_0 : \bG \to \bG$ with the following properties:
\begin{enumerate}
\item $\theta_{0, \bR} : \bG_\bR \to \bG_\bR$ is a Cartan involution.
\item $\theta_{0}$ normalises $\bT$.
\item For all $\alpha \in \Phi$, $d\theta_0(\frg_\alpha) = \frg_{-\alpha}$.  
\end{enumerate}
\end{proposition}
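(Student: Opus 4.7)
The plan is to construct $\theta_0$ as the Chevalley involution associated to a pinning of $(\bG, \bT)$, and to establish uniqueness by combining the $\bZ$-rigidity of the group scheme with the analytic Cartan condition.

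For existence, I would fix a system of positive roots $\Phi^+$ with simple roots $\Delta$, and use the root vectors $X_\alpha \in \frg_\alpha$ and $Y_\alpha \in \frg_{-\alpha}$ constructed in the paragraphs preceding the proposition. The isomorphism theorem for split reductive group schemes over $\bZ$ (see, e.g., \cite{Con14}) produces a unique $\bZ$-automorphism $\theta_0$ of $\bG$ satisfying $d\theta_0(X_\alpha) = -Y_\alpha$ for $\alpha \in \Delta$ and inverting $\bT$; the fact that $\theta_0^2 = \mathrm{id}$ follows because $\theta_0^2$ preserves the given pinning data. Properties (2) and (3) are then immediate: (3) holds for $\alpha \in \Delta$ by construction, and for general $\alpha \in \Phi$ by conjugating with the elements $s_\alpha \in \bG(\bZ)$ already defined in the text. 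For (1), I would check that the fixed subgroup $\bG^{\theta_0}(\bR)$ contains a copy of $\SO_2(\bR)$ inside each root $\SL_2$; these root-$\SO_2$'s generate the standard maximal compact subgroup of the split real form $\bG(\bR)$, and a dimension count confirms there is no excess.

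For uniqueness, suppose $\theta$ also satisfies (1)--(3). Conditions (2) and (3) force the action of $\theta$ on $X^*(\bT)$ to send every root to its negative, and since $\Phi$ spans $X^*(\bT) \otimes \bQ$ this action is $\chi \mapsto -\chi$; in particular $\theta$ inverts $\bT$, exactly as $\theta_0$ does. Therefore $\sigma := \theta \circ \theta_0$ is a $\bZ$-automorphism of $\bG$ that fixes $\bT$ pointwise and preserves each root space $\frg_\alpha$. Since $\frg_\alpha$ is free of rank one over $\bZ$, $\sigma$ acts on it by a scalar $c_\alpha \in \bZ^\times = \{\pm 1\}$, and the bracket relations force $\alpha \mapsto c_\alpha$ to extend to a homomorphism $c : Q \to \{\pm 1\}$ from the root lattice. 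Such a $c$ corresponds to a point $t \in \bT^{\mathrm{ad}}(\bZ)$ with $\sigma = \Ad(t)$. To eliminate the residual sign ambiguity, I would lift $t$ arbitrarily to $\bT(\bR)$ and compute $d\theta(X_\alpha) = \Ad(t)(-Y_\alpha) = -\alpha(t)^{-1} Y_\alpha$; the $(+1)$-eigenspace of $d\theta$ on $\frg_\alpha \oplus \frg_{-\alpha}$ is then spanned by $X_\alpha - \alpha(t)^{-1} Y_\alpha$, and the Killing form of this vector with itself is a positive multiple of $-\alpha(t)^{-1}$. The negative-definiteness required for $\theta$ to be a Cartan involution therefore forces $\alpha(t) > 0$, hence $c_\alpha = +1$, for every $\alpha \in \Phi$. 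So $t$ is trivial in $\bT^{\mathrm{ad}}$ and $\theta = \theta_0$.

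The main obstacle I anticipate is the uniqueness step: existence amounts to bookkeeping with Chevalley pinnings, but uniqueness requires combining the $\bZ$-rigidity (cutting the ambiguity down to a sign on each root space) with the analytic Cartan condition (eliminating that sign via the positivity of $\alpha(t)$). The coincidence that ``inverts $\bT$'' and ``swaps $\frg_\alpha$ with $\frg_{-\alpha}$ for all $\alpha$'' are both expressions of the induced action on $X^*(\bT)$ being $-\mathrm{id}$ is the key linkage that makes the argument cohere.
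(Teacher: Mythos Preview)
Your proposal is correct and follows essentially the same architecture as the paper: construct $\theta_0$ as the Chevalley involution attached to a pinning (the paper cites \cite[Theorem 6.1.17]{Con14} for exactly this), then for uniqueness reduce $\theta\theta_0^{-1}$ to $\Ad(t)$ for some $t\in\bT^{\mathrm{ad}}(\bZ)$ and kill the residual $\pm1$ ambiguity using the Cartan positivity condition.

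The one substantive difference is in how property~(1) is verified. The paper checks directly that the symmetric form $(X,Y)_0=-\kappa(X,d\theta_0 Y)$ is positive definite on $\frg_\bR$, using the orthogonality of the Cartan decomposition. This is cleaner than your root-$\SO_2$ generation plus dimension-count sketch, which would need extra care with connected components (a dimension count only pins down $(G^{\theta_0})^\circ$). More importantly, the paper's choice pays off in the uniqueness step: having $(\,\cdot\,,\,\cdot\,)_0$ already in hand, the paper simply observes that the corresponding form for $\theta_1$ is $(X,Y)_1=(\Ad(t^{-1})X,Y)_0$, and positive-definiteness forces $\Ad(t)$ to act by $+1$ on every summand. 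Your explicit eigenvector-and-Killing-form computation reaches the same conclusion, but the paper's phrasing makes the existence and uniqueness halves of the argument mesh more economically.
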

\begin{proof}
Fix a choice of basis vector $X_\alpha \in \frg_\alpha$ for each $\alpha \in \Delta$. According to \cite[Theorem 6.1.17]{Con14}, there is a unique involution $\theta_0 : \bG \to \bG$ that acts as inversion on $\bT$ and satisfies $d\theta_0(X_\alpha) = -Y_\alpha$ for each $\alpha \in \Delta$. In particular, $\theta_0$ normalises $\bT$. To check that $\theta_{0, \bR}$ is a Cartan involution, it is equivalent to check that the symmetric bilinear form $(X, Y)_0 = -\kappa(X, d\theta_0(Y))$ on $\frg_\bR$ is positive definite (where $\kappa$ is the Killing form). This is true: in fact, the decomposition 
\begin{equation}\label{eqn_real_Cartan_decomposition} \frg_\bR = \frt_\bR \oplus \bigoplus_{\alpha \in \Phi} \frg_{\alpha, \bR} 
\end{equation}
is orthogonal for $(\cdot, \cdot)_0$, and the restriction of $d\theta_0$ to each summand is positive definite. 

To show uniqueness, suppose that $\theta_1 : \bG \to \bG$ is another involution with the given properties. Then $\theta_0 \theta_1$ is an automorphism of $\bG$ that normalises $\bT$ and fixes each element of $\Phi$, so must have the form $\operatorname{Ad}(t)$ for some $t \in \bT^{ad}(\bZ)$, where $\bG^{ad}$ is the adjoint group (i.e. the quotient of $\bG$ by the centre $Z_\bG$) and $\bT^{ad} \leq \bG^{ad}$ is the image of $\bT$ in $\bG^{ad}$. Therefore $\theta_0 = \Ad(t) \circ \theta_1$. Moreover, the symmetric bilinear form $(X, Y)_1 = (\Ad(t^{-1}) X, Y)_0$ must again be positive definite. Since $\Ad(t)$ preserves the summands of the decomposition (\ref{eqn_real_Cartan_decomposition}), acting on each summand by multiplication by $\pm1$, $(\cdot, \cdot)_1$ can be positive definite only if $\Ad(t)$ acts by multiplication by $+1$ on each summand, or in other words if $\Ad(t) = 1$ and $\theta_0 = \theta_1$.
\end{proof}

Let $K = G^{\theta_0}$. The Iwasawa decomposition of $G$ (see \cite[Ch. VI, \S 4]{Knapp}) is then the statement that the multiplication map
\[ K \times A \times N \to G, (k, a, n) \mapsto kan, \]
is a diffeomorphism.

We define $X_G = K \backslash G$ to be the corresponding symmetric space for $G$. It is a (right) homogenous space for $G$ that is diffeomorphic to Euclidean space. Indeed, the Iwasawa decomposition allows us to identify $X_G$ with the product $A \times N$ (although we can't easily see the action of $G$ this way).

\subsection{Set-up for the case $\bG = \SL_g$}\label{subsec_setup_in_case_SL_g}

We now make all of the above explicit in the case $\bG = \SL_g$, with $\bT \leq \bG$ the subgroup of diagonal matrices. Then $\frs\frl_g = \Lie (\SL_g)$ is the Lie algebra of trace-zero $g \times g$ matrices. The roots are the homomorphisms 
\[ \alpha_{i, j} : \bT \to \bG_m, \diag(t_1, \dots, t_g) \mapsto t_i / t_j \]
for $1 \leq i,  j \leq g$, $i \neq j$. A  root basis is $\Delta = \{ \alpha_{1, 2}, \alpha_{2, 3}, \dots, \alpha_{g-1, g} \}$. The corresponding system of positive roots is $\Phi^+ = \{ \alpha_{i, j} \mid i < j \}$. With this choice, $N \leq \SL_g(\bR)$ is simply the group of upper-triangular matrices with $1$'s on the diagonal. The group $A$ is the group of diagonal matrices in $\SL_g(\bR)$ with positive entries. We choose the root vector $X_{\alpha_{i, j}}$ to be the elementary matrix $E_{i, j}$ (with $1$ in the $(i, j)$-entry and $0$'s elsewhere). Then the element $s_{\alpha_{i, j}} \in \SL_g(\bZ)$ corresponding to a root $\alpha_{i, j}$ is the matrix that (in terms of the standard basis $e_1, \dots, e_g$ of $\bR^g$) sends $e_i$ to $-e_j$, $e_j$ to $e_i$, and fixes the remaining basis vectors. 

The involution $\theta_0$ is defined by $\theta_0(g) = {}^t g^{-1}$. Then $K = \SO_g(\bR) = \{ x \in \SL_g(\bR) \mid {}^t x x = 1 \}$. We can identify $X_{\SL_g} = \SO_g(\bR) \backslash \SL_g(\bR)$ with the set of inner products $H$ on $\bR^g$ such that the lattice $\bZ^g$ has covolume 1. Indeed, there is a transitive right action of $\SL_g(\bR)$ on the set of such inner products (given by $H \cdot x = {}^t x H x$), and the stabiliser of the standard inner product $H_0$ is exactly the group $K$. 

In particular, the Iwasawa decomposition $X_{\SL_g} \cong A \times N$ implies that for any inner product $H$, there is a unique $a \in A$, $n  \in N$ such that $H = {}^t(an) (an)$. As is well-known, computing $a$ and $n$ corresponds to carrying out Gram--Schmidt orthogonalisation. More precisely, let $e_1^\ast, \dots, e_g^\ast \in \bR^g$ be the elements obtained after carrying out the Gram--Schmidt process with respect to $H$ on the standard basis $e_1, \dots, e_g$; then there are formulae $e_1 = e_1^\ast$ and 
\[e_j = e_j^\ast + \sum_{i=1}^{j-1} \mu_{i, j} e_i^\ast \]
($2 \leq j \leq g$) with $\mu_{i, j} = ( e_j, e_i^\ast)_H / (e_i^\ast, e_i^\ast)_H$.
We take $\mu_{i, i} = 1$ and $\mu_{i, j} = 0$ if $i > j$. We then have (cf. \cite[Ch. VI, \S 4, Example]{Knapp}):
\begin{lemma}\label{lem_Gram_Schmidt_for_SL_g}
Fix an inner product $H$. With notation as above, let $n = n_H = ( \mu_{i, j} )_{1 \leq i, j \leq g}$ and $a = a_H = \diag( \| e_i^\ast \|_H)_{1 \leq i \leq g}$. Then $a \in A$, $n \in N$, and $H = {}^t(an) an$. 
\end{lemma}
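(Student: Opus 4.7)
The plan is to interpret the Gram--Schmidt data matricially and then recognise $H = {}^t(an)(an)$ as a direct consequence of orthogonality.

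First I would verify that $n$ and $a$ lie in the claimed subgroups. Since $\mu_{i,i} = 1$ and $\mu_{i,j} = 0$ for $i > j$, the matrix $n$ is upper-triangular with $1$'s on the diagonal, so $n \in N$ by the explicit description of $N$ in \S \ref{subsec_setup_in_case_SL_g}. The matrix $a$ is diagonal with strictly positive entries $\|e_i^\ast\|_H > 0$, so it lies in $A$ provided $\det a = 1$. This uses the standing covolume-one hypothesis: the Gram matrix of the orthogonal basis $(e_i^\ast)$ is $\diag(\|e_i^\ast\|_H^2) = a^2$, so $(\det a)^2$ equals the squared covolume of the lattice spanned by $(e_i^\ast)$, which equals $\det H = 1$ since $(e_i^\ast)$ is related to the standard basis by the unimodular matrix $n$.

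Next I would record the change-of-basis identity in matrix form. Rearranging the Gram--Schmidt recursion yields
\[ e_j = \sum_{i=1}^{g} \mu_{i,j}\, e_i^\ast, \]
with the conventions $\mu_{i,i}=1$, $\mu_{i,j}=0$ for $i>j$. Let $E^\ast$ denote the $g \times g$ matrix whose $j$-th column is $e_j^\ast$ (expressed in the standard basis). Then the displayed identity says exactly $I = E^\ast \, n$, i.e.\ $E^\ast = n^{-1}$.

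Finally I would combine the two ingredients. By orthogonality of the Gram--Schmidt basis with respect to $H$, the $(i,j)$-entry of ${}^t E^\ast\, H\, E^\ast$ is $(e_i^\ast, e_j^\ast)_H = \delta_{ij}\|e_i^\ast\|_H^2$, so ${}^t E^\ast\, H\, E^\ast = a^2$. Substituting $E^\ast = n^{-1}$ gives ${}^t n^{-1}\, H\, n^{-1} = a^2$, whence
\[ H = {}^t n\, a^2\, n = {}^t n\, {}^t a\, a\, n = {}^t(an)(an), \]
using that $a$ is diagonal, hence symmetric. There is no real obstacle here: the only mild subtlety is keeping the row/column conventions consistent so that $I = E^\ast n$ comes out the right way around, and remembering that the determinant condition $a \in A$ is where the covolume-one normalisation enters.
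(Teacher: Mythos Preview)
Your argument is correct: the matrix interpretation of Gram--Schmidt gives $E^\ast = n^{-1}$ and ${}^t E^\ast H E^\ast = a^2$, and the covolume-one hypothesis forces $\det a = 1$. The paper does not supply its own proof of this lemma but simply cites \cite[Ch.\ VI, \S 4, Example]{Knapp}, so your self-contained verification is exactly the standard computation underlying that reference and there is nothing further to compare.
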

It follows that Definition \ref{defn_standard_LLL} (which expresses what it means for $H$ to be $\delta$-reduced) may be expressed in terms of the matrices $a$, $n$. This leads to the following lemma.
\begin{lemma}\label{lem_equivalence_with_LLL}
Let $\delta \in (1/4, 1)$. The inner product $H = {}^t(an) an$ is $\delta$-reduced in the sense of Definition \ref{defn_standard_LLL} if and only if the following conditions are satisfied:
\begin{enumerate}
\item $|n_{i, j}| \leq 1/2$ for all $1 \leq i < j \leq g$.
\item $\alpha_{i, i+1}(a)^2 \leq (\delta - n_{i, i+1}^2)^{-1}$ for each $i = 1, \dots, g-1$. 
\end{enumerate} 
\end{lemma}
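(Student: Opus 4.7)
The proof is essentially a dictionary between the Gram--Schmidt data $(\mu_{i,j}, \|e_i^\ast\|_H)$ appearing in Definition \ref{defn_standard_LLL} and the Iwasawa coordinates $(a, n)$ produced in Lemma \ref{lem_Gram_Schmidt_for_SL_g}, so my plan is to simply unpack both sides and observe that they coincide term by term.

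First I would invoke Lemma \ref{lem_Gram_Schmidt_for_SL_g}, which gives the explicit identifications $n_{i,j} = \mu_{i,j}$ for $1 \leq i < j \leq g$ (together with $n_{i,i}=1$ and $n_{i,j}=0$ for $i>j$) and $a_{i,i} = \|e_i^\ast\|_H$. Since $H = {}^t(an)(an)$ determines the pair $(a,n)$ uniquely by the Iwasawa decomposition, these are the only candidates to check.

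Next I would compare the two conditions one at a time. Condition (1) of Definition \ref{defn_standard_LLL}, namely $|\mu_{i,j}| \leq 1/2$ for $1 \leq i < j \leq g$, is by the above identification literally the condition $|n_{i,j}| \leq 1/2$. For condition (2), I would compute
\[ \alpha_{i,i+1}(a) = a_{i,i}/a_{i+1,i+1} = \|e_i^\ast\|_H / \|e_{i+1}^\ast\|_H, \]
using the explicit form of the simple root $\alpha_{i,i+1}(\diag(t_1,\ldots,t_g)) = t_i/t_{i+1}$ recalled in \S\ref{subsec_setup_in_case_SL_g}. Squaring gives
\[ \alpha_{i,i+1}(a)^2 = (e_i^\ast, e_i^\ast)_H/(e_{i+1}^\ast, e_{i+1}^\ast)_H, \]
which is exactly the left-hand side of the Lov\'asz condition. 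The right-hand side matches under the identification $\mu_{i,i+1} = n_{i,i+1}$.

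Since everything is literally an identity of quantities, there is no real obstacle; the only thing worth being careful about is making sure one reads off the correct diagonal entries of $a$ and the correct off-diagonal entries of $n$ from Lemma \ref{lem_Gram_Schmidt_for_SL_g}, and that one uses the right normalisation convention $\alpha_{i,j}(\diag(t_1,\ldots,t_g)) = t_i/t_j$ (rather than its inverse) so that $\alpha_{i,i+1}(a) > 1$ corresponds to $e_i^\ast$ being longer than $e_{i+1}^\ast$, as needed for the Lov\'asz condition to have the stated form.
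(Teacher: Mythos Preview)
Your proposal is correct and matches the paper's approach: the paper does not even write out a proof for this lemma, merely remarking before the statement that Definition \ref{defn_standard_LLL} ``may be expressed in terms of the matrices $a$, $n$'' via Lemma \ref{lem_Gram_Schmidt_for_SL_g}. Your argument is exactly the unpacking of that dictionary, and there is nothing to add.
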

In the algorithm described in \cite{Len82}, an inner product $H$ is moved to a reduced one by repeatedly applying the following two kinds of operations:
\begin{itemize}
\item Size-reduction: replacing $H$ by ${}^t (1 + m E_{i, j}) H (1 + m E_{i, j})$ for some $m \in \bZ$, $1 \leq i < j \leq g$.
\item Swapping basis vectors: replacing $H$ by ${}^t s_i H s_i$ for some $i = 1, \dots, g-1$, where $s_i = s_{\alpha_{i, i+1}}$.
\end{itemize}
These two operations correspond, respectively, to acting on $X_{\SL_g}$ on the right by elements of the form $\gamma = u_{\alpha_{i, j}}(m) \in \bN(\bZ)$, and by elements of the form $s_i$. It is easy to express the effect of the first operation in terms of the co-ordinates $(a, n)$: $(a, n)$ is replaced by $(a, n \gamma)$. The essential content of this step is that the set $\omega \leq N$ of matrices whose above-diagonal entries have absolute value at most $1/2$ is a fundamental set for the right action of $\bN(\bZ)$ on $N$. It is trickier to express the effect of swapping basis vectors in these co-ordinates. In \cite{Len82}, this is computed by looking at the effect on the Gram--Schmidt process for the new basis; we will see below that it suffices instead to do a computation in the root $\SL_2$ corresponding to the chosen simple root. 

\subsection{What it means to be reduced}\label{s-reduced}

We now return to the setting of \S \ref{s-set-up}, so $\bG$ is a split semisimple group with split maximal torus $\bT$. We fix a set $\Delta$ of simple roots as in \S \ref{s-set-up} and let $\bN$ be the corresponding unipotent subgroup.
We are nearly ready to say what it means for a point $H \in X_G$ to be reduced. This will depend on a choice of parameter $\delta \in (1/4, 1)$ and also a fundamental set $\omega \subset N$ for the right action of $\bN(\bZ)$. 

We first explain the properties that $\omega$ must have. By a fundamental set for the action of $\bN(\bZ)$, we mean a relatively compact subset $\omega \subset N$ such that $\omega \bN(\bZ) = N$. If we fix for each $\alpha \in \Delta$ a basis $X_\alpha$ for $\frg_\alpha$, then there is a unique homomorphism $p : N \to \bR^\Delta$ sending $u_\alpha(t)$ to the tuple $(t \delta_{\alpha \beta})_{\beta \in \Delta}$ for each $\alpha \in \Delta$. (Here $\del_{\al\be}$ is the Kronecker delta.)
Indeed, this follows from the fact that the group $N_+ := \la u_\al(t) \mid \al \in \Phi^+ - \Delta, t \in \bR\ra$ is normal in $N$, and every element of the quotient  $N/N_+$ can be written uniquely in the form $\prod_{\al \in \Delta} u_\al(t_\al)$ for some $t_\al \in \bR$ (cf. \cite[Corollaries 2 and 4 to Lemma 18]{Steinberg}). Note that $p$ does not depend on the ordering of the simple roots, since for simple roots $\al_i, \al_j$, the commutator $[u_{\al_i}(t_i), u_{\al_j}(t_j)] \in N_+$ (cf. \cite[Lemma 15]{Steinberg}). We write $p_\alpha$ for the composite of $p$ with projection to the $\alpha$-factor (so $p_\alpha(\prod_{\al \in \Delta} u_\al(t_\al)) = t_\alpha$).
For example, if $\bG = \SL_g$ and $X_{\alpha_{i, i+1}} = E_{i, i+1}$ for each $i = 1, \dots, g-1$, then $p$ is the homomorphism that takes a unipotent upper-triangular matrix to its $(g -1)$-tuple of super-diagonal entries. 

We say that a fundamental set $\omega$ is \textit{allowable} if $p(\omega) \subset [-1/2, 1/2]^\Delta$. This condition does not depend on the choice of basis vectors $X_{\alpha}$ (which are determined up to sign). Allowable fundamental sets exist, and can be constructed using the structure theory of the group $\bG$ (see the remarks after Algorithm \ref{alg_LLL_for_G} and the appendix). In particular examples, there is often a co-ordinate system leading to a natural choice (such as the one described for $\SL_g$ in \S \ref{subsec_setup_in_case_SL_g} above).
\begin{definition}\label{defn_general_definition_of_reducedness}
Let $\delta \in (1/4, 1)$, and let $\omega \subset N$ be an allowable fundamental set. We say that a point $x = K a n \in X_G$ is \emph{$(\delta, \omega)$-reduced} if it satisfies the following conditions:
\begin{enumerate} \item $n$ is size-reduced: $n \in \omega$.
\item $a$ satisfies the $\bG$-Lov\'asz condition: for each simple root $\alpha \in \Delta$, $\alpha(a)^2 \leq (\delta - p_\alpha(n)^2)^{-1}$. 
\end{enumerate}
\end{definition}
We give some initial evidence that this is a good notion. First, if $\bG = \SL_g$ (and $\omega$ is the standard choice, described in \S \ref{subsec_setup_in_case_SL_g}) then it specialises to the condition given in \cite{Len82}, by Lemma \ref{lem_equivalence_with_LLL}. Second, for a general group, we have the following weak statement:
\begin{proposition}
Let $\delta, \omega$ be as in Definition \ref{defn_general_definition_of_reducedness}. Then there is an integer $C = C(\delta, \omega) > 0$ such that each $G(\bZ)$-orbit in $X_G$ contains at most $C$ reduced elements. 
\end{proposition}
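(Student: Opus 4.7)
The plan is to reduce the claim to the classical Siegel finiteness property in reduction theory. Using the Iwasawa decomposition $X_G \cong A \times N$, the set $R$ of $(\delta, \omega)$-reduced points satisfies $R \subset A' \times \omega$, where $A' = \{a \in A : \alpha(a) \leq t^{-1} \text{ for all } \alpha \in \Delta\}$ with $t = (\delta - 1/4)^{1/2}$; here we use the allowability bound $|p_\alpha(n)| \leq 1/2$ to estimate $p_\alpha(n)^2$ in the Lov\'asz inequality. The set $A'$ is not relatively compact, but it is the \emph{anti-dominant} analog of a Siegel cone: it bounds $\al(a)$ from above (rather than below) on the simple roots.

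The first step is to convert $R$ into a genuine Siegel set by acting on the right by a lift of the longest element of the Weyl group. Let $w_0 \in W$ be the longest element, satisfying $w_0 \Delta = -\Delta$, and write it as a product of simple reflections. The elements $s_\al \in \bG(\bZ)$ of \S \ref{s-set-up} lie in $K$ (by a direct computation in each root $\SL_2$, where $\begin{psmallmatrix} 0 & 1 \\ -1 & 0 \end{psmallmatrix} \in \SO_2(\bR)$), so we obtain a lift $\tilde{w}_0 \in \bG(\bZ) \cap K$. For $x = K a n \in R$, using $\tilde{w}_0 \in K$ and the fact that conjugation by $\tilde{w}_0^{-1}$ sends $N$ to the opposite maximal unipotent $N^-$, we have
\[ x \tilde{w}_0 = K \tilde{w}_0 \cdot (\tilde{w}_0^{-1} a \tilde{w}_0) \cdot (\tilde{w}_0^{-1} n \tilde{w}_0) = K a^\vee n^\vee, \]
with $a^\vee \in A$ and $n^\vee \in N^-$. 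For $\alpha \in \Delta$, writing $w_0 \al = -\be$ with $\be \in \Delta$ gives $\al(a^\vee) = \be(a)^{-1} \geq t$. Hence $R \tilde{w}_0 \subset \mathfrak{S}^- := K A_t \omega^\vee$, where $A_t = \{a \in A : \al(a) \geq t \text{ for all } \al \in \Delta\}$ and $\omega^\vee = \tilde{w}_0^{-1} \omega \tilde{w}_0 \subset N^-$ is relatively compact; so $\mathfrak{S}^-$ is a Siegel set with respect to the opposite Borel subgroup of $\bG$.

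The second step applies the classical Siegel finiteness theorem (see e.g.\ \cite{Con14}) to conclude that $S := \{\gamma \in \bG(\bZ) : \mathfrak{S}^- \cap \mathfrak{S}^- \gamma \neq \emptyset\}$ is finite; if preferred, one first reduces to the standard Borel by applying the Cartan involution $\theta_0$, which preserves $\bG(\bZ)$ and exchanges $N$ with $N^-$. Now if $x_1, x_2 \in R$ lie in the same orbit $\mathcal{O}$ with $x_2 = x_1 \gamma$, then $x_1 \tilde{w}_0, x_2 \tilde{w}_0 \in \mathfrak{S}^-$ and $x_2 \tilde{w}_0 = x_1 \tilde{w}_0 \cdot (\tilde{w}_0^{-1} \gamma \tilde{w}_0)$, forcing $\tilde{w}_0^{-1} \gamma \tilde{w}_0 \in S$. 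Fixing a base point $x_0 \in R \cap \mathcal{O}$, the map $x \mapsto \gamma$ (where $x = x_0 \gamma$) embeds $R \cap \mathcal{O}$ into $\tilde{w}_0 S \tilde{w}_0^{-1}$, giving $|R \cap \mathcal{O}| \leq |S| =: C(\delta, \omega)$. The main obstacle is making the Siegel finiteness input precise in the stated generality --- for the opposite Borel of an arbitrary split semisimple $\bG/\bZ$ --- though this is routine via the Cartan involution.
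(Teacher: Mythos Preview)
Your argument is correct, but the detour through the longest Weyl element is unnecessary and stems from a convention mismatch. In the reduction theory of Borel (the reference \cite{Borel} used in the paper), a Siegel set is defined as $\mathfrak{S}_{c,\Omega} = K A_c \Omega$ with $A_c = \{ a \in A \mid \alpha(a) \leq c \text{ for all } \alpha \in \Delta \}$ and $\Omega \subset N$ relatively compact --- i.e.\ the simple roots are bounded \emph{above}, not below. Thus your set $A' \times \omega$ is already (the image in $X_G$ of) the Siegel set $\mathfrak{S}_{c,\omega}$ with $c = (\delta - 1/4)^{-1/2}$, and the paper applies the Siegel finiteness property directly: the set $\{ \gamma \in \bG(\bZ) \mid \mathfrak{S}_{c,\omega} \cap \mathfrak{S}_{c,\omega}\gamma \neq \emptyset \}$ is finite, and its cardinality serves as $C$. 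Your conjugation by $\tilde{w}_0$ converts this into a Siegel set for the opposite Borel (with the ``$\geq t$'' convention) and then appeals to the same finiteness theorem, possibly after a further pass through $\theta_0$; this is valid but adds two steps that the paper avoids entirely. The underlying input --- the Siegel finiteness property for arithmetic quotients --- is identical in both approaches.
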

\begin{proof}
We deduce this from the reduction theory developed by Borel and Harish-Chandra as described in \cite{Borel}. Recall that a Siegel set is a subset of $G$ of the form $\mathfrak{S}_{c, \Omega} = K A_c \Omega$, where:
\begin{itemize} 
\item $c > 0$ and $A_c = \{ t \in A \mid \forall \alpha \in \Delta, \alpha(t) \leq c \}$.
\item $\Omega \subset N$ is a relatively compact subset. 
\end{itemize} 
It is known that for any Siegel set $\mathfrak{S} = \mathfrak{S}_{c, \Omega}$, the set $S_{c, \Omega} = \{ \gamma \in \bG(\bZ) \mid \mathfrak{S} \cap \mathfrak{S} \gamma \neq \emptyset \}$ is finite. Moreover, one can choose $c, \Omega$ so that $G = \mathfrak{S}_{c, \omega} \bG(\bZ)$. 
Here we observe that if $H \in X_G$ is $(\del, \omega)$-reduced, then it lies in the Siegel set $\mathfrak{S}_{c, \omega}$, where $c = (\delta - 1/4)^{-1/2}$. We see that the proposition holds with $C = \# S_{c, \omega}$.
\end{proof}

We remark that, as follows from the properties used in the proof, the Siegel sets $\mathfrak{S}_{c, \Omega}$ (and their projections to $X_G$) are a class of subsets that are `close to fundamental domains for the action of $\bG(\bZ)$'. 

One can ask for more refined statements. For example, it is shown in \cite{Len82} that if $H$ is an inner product on $\bR^g$ corresponding to a $(3/4)$-reduced point of $X_{\SL_g}$, then the standard basis vectors $e_i$ satisfy
\[ 2^{1-i} \lambda_{i, H} \leq (e_i, e_i)_H \leq 2^{g-i} \lambda_{i, H} \]
for each $i = 1, \dots, g$, where $\lambda_{i, H}$ is the $i^\text{th}$ Minkowksi successive minimum of the lattice $(\bZ^g, (\cdot, \cdot)_H)$. In particular, the length of $e_1$ is bounded by a constant (i.e. depending only on $g$) multiple of the length of a shortest vector. In the cases we consider where $\bG \leq \SL_g$ is a classical group (therefore with a defining $g$-dimensional representation), points of $X_G$ can be interpreted as inner products on $\bR^g$ satisfying additional symmetries (see Propositions \ref{prop_interpretation_of_symmetric_space} and \ref{prop_interpretation_of_symmetric_space_case_SO_2g} below), and one can prove analogous statements. 

\subsection{The action of a simple reflection}\label{s-reflection}

Our final task before describing the reduction algorithm is understanding the action of a simple reflection $s_\alpha$ ($\alpha \in \Delta$) on $X_G$ in terms of the co-ordinates coming from the isomorphism $X_G \cong A \times N$. 
For the remainder of the subsection, we fix $\al \in \Delta$ and a choice of basis vector $X_\alpha \in \frg_\alpha$. This determines the homomorphism $\phi_\alpha : \SL_2 \to \bG$ (as defined in \S \ref{s-set-up}) and the simple reflection $s_\alpha = \phi_\alpha\begin{psmallmatrix} 0 & 1 \\ -1 & 0 \end{psmallmatrix} \in \bG(\bZ)$. We observe that $\phi_\alpha$ intertwines the Cartan involution $x \mapsto {}^t x^{-1}$ of $\SL_2$ with $\theta_0$. In particular, $\phi_\alpha(\SO_2(\bR)) \leq K$. 
\begin{lemma}\label{lem_action_of_simple_reflection}
\begin{enumerate} 
\item We have $s_\alpha \in K \cap \bG(\bZ)$.
\item 
The element $s_\alpha$ normalises $N_\alpha = \ker( p_\alpha : N \to \bR )$. 
\end{enumerate}
\end{lemma}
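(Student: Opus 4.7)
My plan is to handle (1) directly from the construction of $s_\alpha$ and to reduce (2) to the standard root-system fact that $s_\alpha$ permutes $\Phi^+ \setminus \{\alpha\}$.

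For (1), the observation is that $\begin{psmallmatrix} 0 & 1 \\ -1 & 0 \end{psmallmatrix}$ lies in $\SL_2(\bZ) \cap \SO_2(\bR)$. Since $\phi_\alpha : \SL_2 \to \bG$ is a morphism over $\bZ$ and $\phi_\alpha(\SO_2(\bR)) \subseteq K$ by the remark immediately preceding the lemma, applying $\phi_\alpha$ deposits $s_\alpha$ in $\bG(\bZ) \cap K$.

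For (2), the key step is to give a root-theoretic description: I claim $N_\alpha$ equals the closed connected subgroup $N'_\alpha \leq N$ with Lie algebra $\frn'_\alpha = \bigoplus_{\beta \in \Phi^+ \setminus \{\alpha\}} \frg_\beta$. That $\frn'_\alpha$ is a Lie subalgebra follows from the simplicity of $\alpha$: if $\beta, \gamma \in \Phi^+$ satisfied $\beta + \gamma = \alpha$, then $\alpha$ would be a sum of two positive roots, contradicting simplicity. To see $N_\alpha = N'_\alpha$, note that $p_\alpha$ annihilates $u_\beta(t)$ for every $\beta \in \Phi^+ \setminus \{\alpha\}$ (directly from the formula defining $p$ together with the fact that $p$ factors through $N / N_+$), so $N'_\alpha \subseteq N_\alpha$; a dimension count then finishes the comparison once connectedness of $N_\alpha$ is observed (which is automatic since $p_\alpha$ is a morphism of unipotent algebraic groups to $\bG_a$).

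With this description in hand, (2) becomes a matter of Weyl-group calculus: for every root $\beta$, conjugation satisfies $s_\alpha U_\beta s_\alpha^{-1} = U_{s_\alpha(\beta)}$, and the classical fact that $s_\alpha$ sends $\alpha$ to $-\alpha$ while permuting $\Phi^+ \setminus \{\alpha\}$ yields
\[ s_\alpha N_\alpha s_\alpha^{-1} = \langle U_{s_\alpha(\beta)}(\bR) : \beta \in \Phi^+ \setminus \{\alpha\} \rangle = N_\alpha. \]
The main obstacle is therefore the identification $N_\alpha = N'_\alpha$, which requires some care about connectedness and codimension; once it is in place, the rest of the argument is purely combinatorial.
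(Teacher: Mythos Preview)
Your proof is correct and follows essentially the same approach as the paper. For (1) the arguments coincide verbatim; for (2) the paper simply cites \cite[Lemma 20(b)]{Steinberg} together with the fact that $s_\alpha$ permutes $\Phi^+ \setminus \{\alpha\}$, whereas you unpack this by explicitly identifying $N_\alpha$ with the subgroup generated by the root groups $U_\beta$ for $\beta \in \Phi^+ \setminus \{\alpha\}$ and then applying $s_\alpha U_\beta s_\alpha^{-1} = U_{s_\alpha(\beta)}$---which is precisely the content of the cited lemma.
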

\begin{proof}
The first part follows since $\begin{psmallmatrix} 0 & 1 \\ -1 & 0 \end{psmallmatrix} \in \SO_2(\bR)$ and $\phi_\al(\SO_2(\bR))\subset K$. The second follows from \cite[Lemma 20(b)]{Steinberg}, where we are using the fact that $s_\al$ permutes the set $\Phi^+ - \{\al\}$. 
\end{proof}
\begin{lemma}\label{lem_SL_2}
Let $\alpha \in \Delta$ and $t \in \bR$. Then we have the equality
\[ K \phi_\alpha\left( \begin{array}{cc} 1 & 0 \\ t & 1 \end{array} \right) = K \phi_\alpha\left( \left( \begin{array}{cc} y & 0 \\ 0 & y^{-1} \end{array} \right) \left( \begin{array}{cc} 1 & t y^{-2}  \\ 0& 1 \end{array} \right) \right) \]
of right cosets of $K$, where $y = \sqrt{1 + t^2}$.
\end{lemma}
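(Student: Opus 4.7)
The plan is to reduce the identity to a concrete calculation inside $\SL_2(\bR)$. Since $\phi_\alpha$ is a group homomorphism, applying it to an identity of the form $k_0 \cdot L = D \cdot U$ in $\SL_2(\bR)$ (with $k_0 \in \SO_2(\bR)$, $L$ lower triangular, and $DU$ the product on the right-hand side of the displayed equation inside $\phi_\alpha$) yields $\phi_\alpha(k_0) \cdot \phi_\alpha(L) = \phi_\alpha(D) \phi_\alpha(U)$. Because $\phi_\alpha$ intertwines the standard Cartan involution $x \mapsto {}^t x^{-1}$ of $\SL_2$ with $\theta_0$, it sends $\SO_2(\bR)$ into the fixed-point subgroup $K = G^{\theta_0}$. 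Thus $\phi_\alpha(k_0) \in K$, and the desired equality of right $K$-cosets in $G$ follows at once.

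It therefore suffices to exhibit $k_0 \in \SO_2(\bR)$ realizing the coset identity in $\SL_2(\bR)$. The natural candidate is forced: solving
\[ k_0 \begin{pmatrix} 1 & 0 \\ t & 1 \end{pmatrix} = \begin{pmatrix} y & 0 \\ 0 & y^{-1} \end{pmatrix}\begin{pmatrix} 1 & ty^{-2} \\ 0 & 1 \end{pmatrix} = \begin{pmatrix} y & ty^{-1} \\ 0 & y^{-1} \end{pmatrix} \]
for $k_0$ by multiplying on the right by $\begin{psmallmatrix} 1 & 0 \\ -t & 1\end{psmallmatrix}$ gives
\[ k_0 = y^{-1}\begin{pmatrix} 1 & t \\ -t & 1 \end{pmatrix}. \]
One then checks that $k_0 \in \SO_2(\bR)$: $\det k_0 = y^{-2}(1+t^2) = 1$ and ${}^t k_0 k_0 = y^{-2}(1+t^2) I = I$, both by the defining identity $y^2 = 1+t^2$.

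The only real step, then, is the two-by-two matrix computation, which amounts to recognizing the QR decomposition of the matrix $\begin{psmallmatrix} 1 & 0 \\ t & 1 \end{psmallmatrix}$. There is no substantive obstacle; the proof is a direct verification, and the appearance of $y = \sqrt{1+t^2}$ is precisely the normalisation constant that makes $k_0$ orthogonal.
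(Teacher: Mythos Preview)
Your proof is correct and follows exactly the approach indicated in the paper: reduce to $\SL_2$ using the fact that $\phi_\alpha$ intertwines the Cartan involutions (hence $\phi_\alpha(\SO_2(\bR)) \leq K$), then carry out the explicit $2\times 2$ matrix calculation. You have simply written out the details of the ``matrix calculation'' that the paper leaves to the reader.
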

\begin{proof}
We can reduce immediately to the case $\bG = \SL_2$, $\phi_\alpha = \operatorname{id}$, in which case this is a matrix calculation.
\end{proof}

\begin{proposition}\label{prop_effect_of_simple_reflection}
Let $x = K a n \in X_G$, let $t = p_\alpha(n) \in \bR$, and let $m = u_\alpha(-t) n \in N_\al$.  Then $xs_\alpha = K a' n'$, where $a' \in A$, $n' \in N$ are given by the following formulae:
\[ a' = \check{\alpha}\left(\sqrt{1 + t^2 \alpha(a)^2}\right) s_\alpha(a), \]
\[ n' = u_{\alpha}\left( \frac{- t \alpha(a)^2}{1 + t^2 \alpha(a)^2 } \right) s_\alpha^{-1} m s_\alpha. \]
\end{proposition}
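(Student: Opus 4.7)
The strategy is to compute the Iwasawa decomposition of $an s_\alpha$ directly, by first transporting the factor $u_\alpha(t)$ out of $n$ to the far left (past $a$), then transporting $s_\alpha$ all the way to the left past $a$ and $m$ (using Lemmas \ref{lem_action_of_simple_reflection} and \ref{lem_SL_2} to absorb the resulting $KU_{-\alpha}$ coset into the desired $KAN$ form), and finally restoring the standard order.

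Concretely, I would begin by writing $n = u_\alpha(t) m$ with $m \in N_\alpha$ and using the conjugation relation $a u_\alpha(t) a^{-1} = u_\alpha(\alpha(a) t)$ to obtain
\[ x s_\alpha = K \, u_\alpha(u) \, a m s_\alpha, \qquad u := \alpha(a) t. \]
Next I would push $s_\alpha$ to the left of $am$: since $s_\alpha$ centralises $A$ only up to the Weyl action, and normalises $N_\alpha$ by Lemma \ref{lem_action_of_simple_reflection}(2), the identity
\[ a m s_\alpha = s_\alpha \cdot (s_\alpha^{-1} a s_\alpha) \cdot (s_\alpha^{-1} m s_\alpha) = s_\alpha \cdot s_\alpha(a) \cdot m'' \]
holds with $s_\alpha(a) \in A$ and $m'' := s_\alpha^{-1} m s_\alpha \in N_\alpha$. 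This reduces the problem to understanding the single coset $K\, u_\alpha(u) s_\alpha$.

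The heart of the computation, and the step where everything could easily go wrong with signs, is to apply Lemma \ref{lem_SL_2} to this coset. Using Lemma \ref{lem_action_of_simple_reflection}(1) (so $s_\alpha \in K$) together with the relation $s_\alpha^{-1} u_\alpha(u) s_\alpha = \phi_\alpha\bigl(\begin{smallmatrix} 1 & 0 \\ -u & 1\end{smallmatrix}\bigr)$, which is a direct $\SL_2$ matrix computation, one obtains
\[ K\, u_\alpha(u) s_\alpha = K \, \phi_\alpha\!\begin{pmatrix} 1 & 0 \\ -u & 1\end{pmatrix} = K \, \check{\alpha}(y) \, u_\alpha(-u y^{-2}), \qquad y = \sqrt{1+u^2}, \]
the second equality being Lemma \ref{lem_SL_2} with parameter $-u$. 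Substituting back gives
\[ x s_\alpha = K \, \check{\alpha}(y) \, u_\alpha(-u y^{-2}) \, s_\alpha(a) \, m''. \]

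Finally I would pull $s_\alpha(a)$ to the left of $u_\alpha(-u y^{-2})$ using the conjugation rule once more; the key arithmetic input is that $\alpha(s_\alpha(a)) = \alpha(a)^{-1}$ (computed from $s_\alpha(a) = a \check{\alpha}(\alpha(a))^{-1}$ and $\alpha \circ \check{\alpha}(z) = z^2$), so
\[ u_\alpha(-u y^{-2}) \, s_\alpha(a) = s_\alpha(a) \, u_\alpha\!\left(\tfrac{-\alpha(a)^2 t}{1+t^2 \alpha(a)^2}\right). \]
Collecting factors yields exactly the claimed formulas for $a'$ and $n'$. It remains to verify that $a' \in A$ and $n' \in N$: $\check{\alpha}(y) \in A$ because $y>0$; $s_\alpha(a) \in A$ because the Weyl reflection $s_\alpha$ preserves the connected component $T^\circ = A$ (as $a \in A$ forces $\alpha(a) > 0$, so $\check{\alpha}(\alpha(a))^{-1} \in A$ as well); and $n' \in N$ because $m'' \in N_\alpha \subset N$ and $u_\alpha(\cdot) \in N$. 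The only real obstacle is bookkeeping the $\SL_2$-level commutation and the two conjugation identities without sign errors; once Lemma \ref{lem_SL_2} is invoked the computation is forced.
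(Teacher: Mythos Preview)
Your proof is correct and follows essentially the same approach as the paper: both arguments reduce to applying Lemma \ref{lem_SL_2} to the coset $K\,\phi_\alpha\bigl(\begin{smallmatrix}1 & 0\\ -\alpha(a)t & 1\end{smallmatrix}\bigr)$, then commute $u_\alpha$ past $s_\alpha(a)$ using $\alpha(s_\alpha(a)) = \alpha(a)^{-1}$. The only cosmetic difference is the order of operations --- you first pull $u_\alpha(t)$ leftward past $a$ and then move $s_\alpha$ left past $am$, whereas the paper first absorbs $s_\alpha$ into $K$ on the left (writing $Kans_\alpha = K s_\alpha(a)\, s_\alpha^{-1} n s_\alpha$) and then commutes $s_\alpha(a)$ past the lower-unipotent factor; either way one arrives at the same intermediate expression before invoking Lemma \ref{lem_SL_2}.
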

\begin{proof}
We compute. Since $s_\alpha \in K$, we have $K  a n s_\alpha = K s_\alpha^{-1} a s_\alpha s_\alpha^{-1} n s_\alpha = K s_\alpha(a) s_\alpha^{-1} n s_\alpha$. Writing $n = u_{\alpha}(t) m$, we have 
\[ s_\alpha^{-1} n s_\alpha = \phi_\alpha\left( \begin{array}{cc} 1 & 0 \\ -t & 1 \end{array} \right) s_\alpha^{-1} m s_\alpha. \]
Note that $m \in N_\al = \ker( p_\alpha : N \to \bR )$, so $s_\alpha^{-1} m s_\alpha \in N_\alpha \leq N$, by Lemma \ref{lem_action_of_simple_reflection}. 

We have $s_\alpha(a) = a \check{\alpha}(\alpha(a))^{-1}$. Since $(\al \circ \check\al)(z) = z^2$ for all $z \in \bR^\times$, this implies $\alpha(s_\alpha(a)) = \alpha(a)^{-1}$. Thus
\[ K a n s_\alpha = K s_\alpha(a)  \phi_\alpha\left( \begin{array}{cc} 1 & 0 \\ -t & 1 \end{array} \right) s_\alpha^{-1} m s_\alpha = K  \phi_\alpha\left( \begin{array}{cc} 1 & 0 \\ - \alpha(a) t & 1 \end{array} \right) s_\alpha(a) s_\alpha^{-1} m s_\alpha. \]
(Here we're using the fact that $\phi_\al\begin{psmallmatrix} 1 & 0\\-t & 1 \end{psmallmatrix}$ is in the root group corresponding to $-\al$, so the action of $T$ by conjugation is as described in \S \ref{s-set-up}.)
By Lemma \ref{lem_SL_2}, we have $K  \phi_\alpha\begin{psmallmatrix} 1 & 0 \\ - \alpha(a) t & 1 \end{psmallmatrix} = K \check{\alpha}(y) u_{\alpha}( - \alpha(a) t y^{-2} ),$
where $y = \sqrt{1 + \alpha(a)^2 t^2}$, hence
\begin{multline*}  K a n s_\alpha = K \check{\alpha}(y) u_{\alpha}( - \alpha(a) t y^{-2} ) s_\alpha(a) s_\alpha^{-1} m s_\alpha \\
= K \check{\alpha}(y) s_\alpha(a) u_{\alpha}( - \alpha(s_\alpha(a))^{-1} \alpha(a) t y^{-2} ) s_\alpha^{-1} m s_\alpha 
 \\= K \check{\alpha}(y) s_\alpha(a) u_{\alpha} \left( \frac{- t \alpha(a)^2}{1 + t ^2 \alpha(a)^2} \right) s_\alpha^{-1} m s_\alpha, 
\end{multline*}
which is the desired result. 
\end{proof}

\begin{corollary}\label{cor_movement_towards_fundamental_polygon}
Let $x = Kan \in X_G$. Define $t$ and $a'$ as in the previous proposition, and assume $t \in [-1/2, 1/2]$. Suppose $\delta \in (1/4, 1)$ is such that $\alpha(a)^2 > (\delta - t^2)^{-1}$ (in other words, the $\bG$-Lov\'{a}sz condition fails here). Then we have $\alpha(a') < \delta \alpha(a)$ and $\sigma(a') < \delta \sigma(a)$, where $\sigma = \sum_{\beta \in \Phi^+} \beta \in X^\ast(\bT)$.
\end{corollary}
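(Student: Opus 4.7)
My plan is to unfold the formula $a' = \check\alpha(y) s_\alpha(a)$ from Proposition \ref{prop_effect_of_simple_reflection}, where $y = \sqrt{1 + t^2\alpha(a)^2}$, and then to compute $\alpha(a')$ and $\sigma(a')$ directly. In each case the strict inequality will reduce by elementary algebra to the hypothesis $\alpha(a)^2 > (\delta - t^2)^{-1}$ that the $\bG$-Lov\'asz condition fails. Note that $t \in [-1/2,1/2]$ together with $\delta > 1/4$ already ensures $\delta - t^2 > 0$, so both inequalities are meaningful.

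For the first bound, I apply $\alpha$ to $a'$. Using $\alpha\circ\check\alpha(z) = z^2$ and the identity $s_\alpha(a) = a\check\alpha(\alpha(a))^{-1}$ recorded in \S\ref{s-set-up} (which gives $\alpha(s_\alpha(a)) = \alpha(a)^{-1}$), I obtain
\[ \alpha(a') \;=\; y^2 \cdot \alpha(a)^{-1} \;=\; \frac{1 + t^2\alpha(a)^2}{\alpha(a)}. \]
The inequality $\alpha(a') < \delta\alpha(a)$ then clears denominators to $1 < (\delta - t^2)\alpha(a)^2$, which is exactly the hypothesis.

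For the bound on $\sigma(a')$, I need two standard root-system inputs. First, because $s_\alpha$ permutes $\Phi^+\setminus\{\alpha\}$ and sends $\alpha$ to $-\alpha$, one has $s_\alpha(\sigma) = \sigma - 2\alpha$, equivalently $\sigma(s_\alpha(a)) = \sigma(a)\alpha(a)^{-2}$. Second, writing $\sigma = 2\rho$ with $\rho$ the half-sum of positive roots, the classical identity $\langle\rho,\check\alpha\rangle = 1$ for each simple root gives $\sigma(\check\alpha(z)) = z^2$. Combining,
\[ \sigma(a') \;=\; y^2 \cdot \sigma(a)\alpha(a)^{-2} \;=\; \frac{1 + t^2\alpha(a)^2}{\alpha(a)^2}\,\sigma(a). \]
Since $a$ lies in the identity component $A$ of the real torus, $\sigma(a) > 0$; dividing, $\sigma(a') < \delta\sigma(a)$ becomes $\alpha(a)^{-2} + t^2 < \delta$, the same hypothesis again.

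The argument is essentially bookkeeping once Proposition \ref{prop_effect_of_simple_reflection} is in hand. The only step requiring more than the definitions already introduced is the pairing $\langle\rho,\check\alpha\rangle = 1$ for each simple root $\alpha$, which I take as standard. So there is no genuine obstacle; the substance of the corollary is the contraction factor given by Proposition \ref{prop_effect_of_simple_reflection}, and the two inequalities are the purely numerical consequences one would want, the second phrased in terms of $\sigma$ precisely so that it can serve as a monotone invariant in the termination argument for the algorithm in \S\ref{s-algorithm}.
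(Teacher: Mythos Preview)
Your proof is correct and follows essentially the same route as the paper: you compute $\alpha(a')$ and $\sigma(a')$ from the formula $a' = \check\alpha(y)s_\alpha(a)$ using $\alpha\circ\check\alpha(z)=z^2$ and $\langle\sigma,\check\alpha\rangle = 2$, then rearrange to see that both inequalities are equivalent to the hypothesis $\alpha(a)^{-2}+t^2<\delta$. The only difference is that you spell out the justification for $\langle\sigma,\check\alpha\rangle=2$ via $s_\alpha(\sigma)=\sigma-2\alpha$ and $\langle\rho,\check\alpha\rangle=1$, whereas the paper simply asserts it.
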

\begin{proof}
We have $\alpha(a') = (1 + \alpha(a)^2 t^2) \alpha(a)^{-1}$, hence $\alpha(a') / \alpha(a) = \alpha(a)^{-2} + t^2$. We also have $\langle \sigma, \check{\alpha} \rangle = 2$, hence
\[ \sigma(a') = \sigma(\check{\alpha}( \sqrt{1 + t^2 \alpha(a)^2})) \sigma(a \check{\alpha}(\alpha(a))^{-1}) = \sigma(a) (\alpha(a)^{-2} + t^2). \]
Re-arranging now shows that the three conditions $\alpha(a') / \alpha(a)  < \delta$, $\alpha(a) > (\delta - t^2)^{-1/2}$, and $\sigma(a') < \delta \sigma(a)$ are all equivalent. 
\end{proof}

\subsection{The algorithm}\label{s-algorithm}

We maintain the notation of \S \ref{s-reduced}, so $\bG$ is a split semisimple group over $\bZ$ with split maximal torus $\bT$, $\Delta$ is a fixed set of simple roots, $\bN$ is the corresponding maximal unipotent subgroup of $\bG$, and $K \leq G$ is the maximal compact subgroup defined in \S \ref{s-set-up}. In addition, we fix a labelling $\Delta = \{ \alpha_1, \dots, \alpha_r \}$ for the simple roots and, for each $\al \in \Delta$, we fix a basis vector $X_\alpha$ for $\frg_\alpha$ with corresponding root $\SL_2$ given by $\phi_\alpha : \SL_2 \to \bG$. Algorithm \ref{alg_LLL_for_G} describes the main construction of this paper.
\begingroup
\begin{algorithm}
\caption{Lattice reduction for $\mathbf{G}$.} 
\begin{flushleft}
\textbf{Input:} A constant $\delta \in (1/4, 1)$, an allowable fundamental set $\omega \subset N$, and elements $a \in A$, $n \in N$.
\textbf{Output:} An element $\gamma \in \bG(\bZ)$ such that $x = K a n \gamma \in X_G$ is $(\del, \omega)$-reduced, and elements $a(x) \in A$, $n(x) \in N$ such that $x = K a(x) n(x)$. 
\end{flushleft}
  \begin{algorithmic}[1]\label{alg_LLL_for_G}
  \STATE Set $\gamma \gets 1 \in \bG(\bZ)$, $a' \gets a$, $n' \gets n$.
  \STATE\label{alg_size_reduce} Find $\gamma_{n'} \in \bN(\bZ)$ such that $n' \gamma_{n'} \in \omega$.  Set $\gamma \leftarrow \gamma \gamma_{n'}$, $n' \leftarrow n' \gamma_{n'}$. 
\FOR{$i = 1$ \TO $r$}
 \IF{$\alpha_i(a')^2 > (\delta - p_{\alpha_i}(n')^2)^{-1}$}
 \STATE Set $t \gets p_{\alpha_i}(n)$, $m \gets u_{\alpha_i}(-t) n$,  $\gamma \gets \gamma s_{\alpha_i},$ $a' \gets \check{\alpha_i} \left(\sqrt{1 + t^2 \alpha_i(a')^2}\right) s_{\alpha_i}(a'),$
\\ and
$n' \gets u_{\alpha_i}\left( \frac{- t \alpha_i(a')^2}{1 + t^2 \alpha_i(a')^2 } \right) s_{\alpha_i}^{-1} m s_{\alpha_i}$.
\STATE \textbf{go to} 2.
\ENDIF
\ENDFOR
\STATE Set $a(x) \gets a'$, $n(x) \gets n'$.
\RETURN $\gamma, a(x), n(x)$.
\end{algorithmic}
\end{algorithm}
\endgroup

Proposition \ref{prop_effect_of_simple_reflection} shows that at each stage we have $K a n \gamma = K a' n'$, and it is clear that if it the algorithm does terminate, then $K a n \gamma$ is $(\delta, \omega)$-reduced. We need to prove that the algorithm terminates. We will do this by using the geometry of numbers in the Lie algebra $\frg$.
\begin{proof} Let $\kappa$ denote the Killing form of $\frg_\bR$, and let $(\cdot, \cdot)_0$ denote the inner product $(X, Y)_0 = - \kappa(X, d\theta_0 Y)$ on $\frg_\bR$ (it is positive definite, and therefore an inner product, because $\theta_{0, \bR}$ is a Cartan involution.) If $g \in G$, then we write $(\cdot, \cdot)_g$ for the inner product $(X, Y)_g = (\Ad(g)(X), \Ad(g)(Y))_0$. Thus $(\cdot, \cdot)_g$ only depends on the image of $g$ in $X_G = K \backslash G$. We observe that $(\cdot, \cdot)_0$ and $(\cdot, \cdot)_g$ determine inner products on the exterior powers $\bigwedge^r \frg_\bR$ in a natural way. We will also (by abuse of notation) denote these by $(\cdot, \cdot)_0$ and $(\cdot, \cdot)_g$, respectively. 

Let $\frb = \frakt \oplus \sum_{\al \in \Phi^+} \frakg_\al$, and let $\ell = \dim \frb$. Then $\bigwedge^\ell \frb$ is a rank-one $\bZ$-submodule of $\bigwedge^\ell \frakg$. Choose $b \in \bigwedge^\ell \frb$ to be a basis vector. 
We can compute for any $a_1 \in A$, $n_1 \in N$:
\[ ( b, b)_{a_1n_1} = ( \Ad(a_1 n_1)b, \Ad(a_1 n_1)b)_0 = \sigma(a_1)^2 (b, b)_0, \]
where $\sigma = \sum_{\beta \in \Phi^+} \beta \in X^\ast(\bT)$, as before. 

Suppose then that we begin with $a \in A$, $n \in N$, and that the pair $(a_1, n_1)$ is the result of carrying out Step 3 of the above algorithm for the $k^\text{th}$ time, for some $k \geq 1$ (so in particular we are given $\gamma_1 \in \bG(\bZ)$ such that $K a_1 n_1 = K a n \gamma_1$). By Corollary \ref{cor_movement_towards_fundamental_polygon}, we have $\sigma(a_1) < \delta^k \sigma(a)$, hence
\[ ( \Ad(\gamma_1) b, \Ad(\gamma_1) b)_{an} = ( b, b )_{an\gamma_1} = (b, b)_{a_1 n_1} < \delta^{2k} (b, b)_{an}. \]
We now observe that $\Ad(\gamma_1) b$ is another non-zero vector in the lattice $\bigwedge^\ell \frg$. Let $c_1 = (b, b)_{an}^{1/2}$ denote the length of $b$ with respect to the inner product $(\cdot, \cdot)_{an}$, and let $c_0$ denote the minimal length of a non-zero vector in $\bigwedge^\ell \frg$, with respect to the inner product $(\cdot, \cdot)_{an}$. We finally obtain the inequality $c_0 < \delta^k c_1$, hence $k < \log(c_0 / c_1) / \log(\delta) = \log(c_1 / c_0) / \log(\delta^{-1})$. The right-hand side here only depends on $an$ and $\delta$, showing that the algorithm must terminate after at most $\lfloor  \log(c_1 / c_0) / \log(\delta^{-1}) \rfloor$ iterations of Step 3. 
\end{proof}
When $\bG = \SL_g$, the quantity $\sigma(a)$ appearing in the above proof equals the quantity $D$, defined on \cite[p. 521]{Len82} and used in the proof of termination of the LLL algorithm (as can be seen using Lemma \ref{lem_Gram_Schmidt_for_SL_g}, and noting that working in $\SL_g$ means we restrict to lattices of covolume 1). The proof of termination in this case therefore reduces to the one given in \cite{Len82}. 

 In \S\S \ref{sec_case_Sp_2g} -- \ref{sec_case_G_2}, we will make Algorithm \ref{alg_LLL_for_G} more explicit  for the groups  $\bG = \Sp_{2g}$, $\SO_{2g}$, and $G_2$. We make some general remarks relating to implementation:
\begin{itemize}
\item For Step 2, we need to specify $\omega$ and give a procedure that, given $n \in N$, finds $\gamma_n \in \bN(\bZ)$ such that $n \gamma_n \in \omega$. If we fix an ordering of the positive roots, together with a basis vector $X_\alpha$ for each $\alpha \in \Phi^+$, then the product map $\prod_{\alpha \in \Phi^+}  u_\alpha : \bR^{\Phi^+} \to N$ is a diffeomorphism (see \cite[Lemma 18]{Steinberg}). We can use these co-ordinates, together with the commutation relations in the nilpotent group $N$ (cf. \cite[Lemma 15]{Steinberg}), to describe an explicit choice for $\omega$ together with a method of size reduction, `uniformly in the group $\bG$'. We give details in the appendix.

However, in the concrete cases we consider below (including for the case of the exceptional group $G_2$), we will find that the representation of group elements as matrices leads to a more natural choice of $\omega$ and corresponding size-reduction procedure that is closer in spirit to the one used in \cite{Len82}. 

\item In cases of interest, the symmetric space $X_G$ is often identified with a more concrete space (such as the space of covolume-one inner products when $\bG = \SL_g)$. It is therefore desirable to have an explicit way of computing the corresponding co-ordinates $(a, n) \in A \times N$ before proceeding to Step 1. In each of the cases we consider below, this will be done using the Gram--Schmidt process (again, even including for the case of the exceptional group $G_2$).

\item A very important feature of the LLL algorithm is that it terminates in polynomial time for integer lattices, and in general the number of steps required can be bounded explicitly in terms of $\min_j (e_j^\ast, e_j^\ast)_H$ (see \cite[Proposition 1.26]{Len82} or \cite[Theorem 1.1]{Val94}). The above proof that the algorithm terminates implies that the same is true in general. 
\end{itemize} 

\section{The case $\bG = \Sp_{2g}$}\label{sec_case_Sp_2g}

We now make Algorithm \ref{alg_LLL_for_G} more explicit in the case of the symplectic group. We first introduce notation. More details about the structure of $\bG$ (including the root system) can be found, for example, in \cite[\S 16]{FultonHarris}. 

Let $g \geq 2$ be an integer, and let $e_{-g}, e_{1-g}, \dots, e_{-1}, e_1, \dots, e_g$ denote the standard basis of $\bR^{2g}$ (with a shift in indices). Let $\Psi$ denote the $g \times g$ matrix with $1$'s on the antidiagonal and $0$'s elsewhere, and let
\[ S = \left( \begin{array}{cc} 0 & \Psi \\ - \Psi & 0 \end{array}\right). \]
We define
\[ \Sp_{2g} = \{ x \in \GL_{2g} \mid {}^t x S x = S \}. \]
This formula defines a split semisimple group over $\bZ$. A split maximal torus is 
\[ \bT = \{ \diag(t_{-g}, \dots, t_{-1}, t_{-1}^{-1}, \dots, t_{-g}^{-1}) \} \leq \Sp_{2g}. \]
If $-g \leq i \leq g$ and $i \neq 0$, let $\varepsilon_i \in X^\ast(\bT)$ be the character sending $\diag(t_{-g}, \dots, t_g)$ to $t_i$. The roots $\Phi = \Phi(\Sp_{2g}, \bT)$ are given by $\varepsilon_i - \varepsilon_j$ ($-g \leq i, j \leq -1$, $i \neq j$) and $\pm(\varepsilon_i + \varepsilon_j)$ ($-g \leq i \leq j \leq -1$). A system of positive roots is $\Phi^+ = \{ \varepsilon_i - \varepsilon_j \mid -g \leq i < j \leq -1 \} \cup \{ \varepsilon_i + \varepsilon_j \mid -g \leq i \leq j \leq -1 \}$. The corresponding set of simple roots is $\Delta = \{\varepsilon_i - \varepsilon_{i + 1} \mid -g \leq i \leq -2\} \cup \{2\varep_{-1}\}$. The corresponding Borel subgroup $\bB \subset \Sp_{2g}$ is the subgroup of upper-triangular matrices preserving $S$; its unipotent radical $\bN$ is the subgroup of unipotent upper-triangular matrices preserving $S$. The group $A = \bT(\bR)^\circ$ is the subgroup of diagonal matrices with positive entries preserving $S$. 

The Cartan involution $\theta_0 : \Sp_{2g} \to \Sp_{2g}$ is given by the formula $\theta_0(x) = {}^t x^{-1}$. We write $K \leq \Sp_{2g}(\bR)$ for its fixed-point subgroup, a maximal compact subgroup of $\Sp_{2g}(\bR)$. The associated symmetric space $X_{\Sp_{2g}(\bR)}$ is the quotient $K \backslash \Sp_{2g}(\bR)$. We now describe alternative realisations of the symmetric space:
\begin{proposition}\label{prop_interpretation_of_symmetric_space}
There are $\Sp_{2g}(\bR)$-equivariant bijections between the following sets:
\begin{enumerate}
\item The symmetric space $X_{\Sp_{2g}(\bR)}$.
\item The set of inner products $H$ on $\bR^{2g}$ (equivalently, the set of positive-definite symmetric matrices in $M_{2g}(\bR)$) that are compatible with $S$, in the sense that $H^{-1}( {}^t S) = S^{-1} ({}^t H)$.
\item The set of $J \in \Sp_{2g}(\bR)$ such that $J^2 = -1$ and ${}^tJ S$ is positive definite. 
\item The set of matrices $\Pi  \in M_{g}(\bC)$ such that $\Pi$ is symmetric and $\operatorname{Im}( \Pi )$ is positive definite. 
\end{enumerate}
\end{proposition}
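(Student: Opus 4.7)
The plan is to construct the three bijections explicitly and verify that each is $\Sp_{2g}(\bR)$-equivariant, relying throughout on the identities ${}^t g S g = S$ and (equivalently) ${}^t g S = S g^{-1}$ for $g \in \Sp_{2g}(\bR)$, and on the fact that $K = \Sp_{2g}(\bR) \cap \Or_{2g}(\bR)$. Note first that, since $H$ is symmetric and $S$ is antisymmetric, the stated compatibility condition $H^{-1}({}^t S) = S^{-1}({}^t H)$ is equivalent to the cleaner identity $H S H = S$.

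For (1) $\leftrightarrow$ (2), the map I would use is $K g \mapsto {}^t g g$. The image consists of compatible inner products by a direct calculation from ${}^t g S g = S$, and left $K$-invariance as well as injectivity on $K \backslash \Sp_{2g}(\bR)$ follow immediately from $K = \Sp_{2g}(\bR) \cap \Or_{2g}(\bR)$. For surjectivity I would invoke the Cartan decomposition: given compatible $H$, write $H = \exp(A)$ with $A$ real symmetric, note that $H S H = S$ translates to $A S + S A = 0$ (so $A$ lies in the $(-1)$-eigenspace of $d\theta_0$ in $\Lie \Sp_{2g}(\bR)$), and then $g := \exp(A/2) \in \Sp_{2g}(\bR)$ satisfies ${}^t g g = H$.

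For (2) $\leftrightarrow$ (3), I would define $J := S H$, with inverse $J \mapsto H := {}^t J S$. The compatibility $H S H = S$ gives $J^2 = S H S H = S^2 = -I$, and a short computation shows ${}^t J S J = S$ (so $J \in \Sp_{2g}(\bR)$) and $H = {}^t J S$. Positivity of $H$ and of ${}^t J S$ therefore coincide. Equivariance under the right action of $g \in \Sp_{2g}(\bR)$ (which acts as $H \mapsto {}^t g H g$ on (2) and as $J \mapsto g^{-1} J g$ on (3)) is a short calculation using ${}^t g S = S g^{-1}$.

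For (3) $\leftrightarrow$ (4), I would extend scalars, viewing $J$ and $S$ as $\bC$-linear operators on $V_\bC := \bR^{2g} \otimes_\bR \bC$, and decompose $V_\bC = V^+ \oplus V^-$ into the $\pm i$-eigenspaces of $J$. The symplectic condition on $J$ forces each of $V^\pm$ to be Lagrangian for $S$ of $\bC$-dimension $g$. Fixing the Lagrangian decomposition $\bR^{2g} = U \oplus W$ with $U := \langle e_{-g}, \dots, e_{-1}\rangle$ and $W := \langle e_1, \dots, e_g\rangle$, the positivity of ${}^t J S$ translates to the assertion that the projection $V^+ \to W \otimes \bC$ along $U \otimes \bC$ is an isomorphism; thus $V^+$ is the graph of a unique $\bC$-linear map $W \otimes \bC \to U \otimes \bC$, whose matrix $\Pi \in M_g(\bC)$ is the desired object. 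The Lagrangian property of $V^+$ forces $\Pi$ to be symmetric (once the bases of $U$ and $W$ are paired so as to absorb the matrix $\Psi$ appearing in $S$), and positivity of ${}^t J S$ forces $\operatorname{Im}(\Pi)$ to be positive definite. Equivariance with the usual M\"obius-type action of $\Sp_{2g}(\bR)$ on the right-hand side is classical, coming from how $\Sp_{2g}(\bR)$ permutes Lagrangian subspaces of $V_\bC$. The hardest part is precisely this final step: the paper's non-standard index labelling (with $\Psi$ featuring in the block form of $S$) means one has to be careful in choosing the correct Lagrangian projection and pairing so that the resulting $\Pi$ really does land in the Siegel upper half-space with the correct symmetry and positivity conventions. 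The earlier steps reduce to a handful of short matrix identities using ${}^t g S g = S$.
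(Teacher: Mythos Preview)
Your proposal is correct and follows essentially the same route as the paper: the maps $(1)\to(2)$ via $Kg\mapsto{}^t g g$, $(2)\to(3)$ via $H\mapsto SH$ (which equals the paper's $-S^{-1}H$ since $S^2=-I$), and $(3)\to(4)$ via the $\pm i$-eigenspaces of $J$ are exactly the bijections the paper uses. The only differences are that you supply a direct Cartan-decomposition argument for surjectivity in $(1)\to(2)$ where the paper cites \cite[Lemma~7.11]{Grayson}, and you sketch the $(3)\leftrightarrow(4)$ construction where the paper simply refers to \cite{Mum07,Bir04}; both of your arguments are sound and amount to unpacking those references.
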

The fourth set is the standard realisation of the Siegel upper half-space.
\begin{proof}
We first describe the (right) actions of $\Sp_{2g}(\bR)$ on the sets (1) -- (3). The action on $X_{\Sp_{2g}(\bR)}$ is the usual one on right cosets. If $x \in \Sp_{2g}(\bR)$ and $H$ is an inner product as in (2), then we define $ H \cdot x = {}^t x H x$. If $J$ is as in (3), we define $J \cdot x = x^{-1} J x$. 

Let $H_0$ denote the standard inner product on $\bR^{2g}$.
Given $x \in \Sp_{2g}(\bR)$, we associate the inner product ${}^t x H_0 x$. The stabiliser equals $K$, so this gives an injective, $\Sp_{2g}(\bR)$-equivariant map $(1) \rightarrow (2)$. The argument of \cite[Lemma 7.11]{Grayson} shows that $\Sp_{2g}(\bR)$ acts transitively on the set of inner products in (2). Thus the map is bijective. 
Given $H$ in $(2)$, let $J = - S^{-1} H$. An elementary computation shows that $J$ lies in (3), and that the map $H \mapsto J$ is $\Sp_{2g}(\bR)$-equivariant. An inverse is given by $H = - SJ$. Thus (2) and (3) are in bijection.

The bijection $(3) \to (4)$ is part of the `linear algebra of polarised abelian varieties', and is described in e.g. \cite[Ch. II, \S 4, Lemma 4.1]{Mum07} or \cite[Theorem 4.2.1]{Bir04}. We will not use (4) here so we omit the details. 
\end{proof}
By definition, $X_{\Sp_{2g}(\bR)}$ comes equipped with the base point $K$. Under the above bijections, this point corresponds to $H = H_0$ (the standard inner product on $\bR^{2g}$) in (2), $J = S$ in (3), and $\Pi_0 = i 1_g$ in (4). 

Recall that the Iwasawa decomposition gives us a bjiection
\[ A \times N \to X_{\Sp_{2g}(\bR)}, (a, n) \mapsto K a n. \]
It follows that any inner product $H$ compatible with $S$ can be expressed as ${}^t (an) an$ for a unique pair $(a, n) \in A \times N$. This pair can be computed from $H$ using the usual Gram--Schmidt orthogonalisation process. Let $e^\ast_{-g}, \dots, e^\ast_g$ denote the result of carrying out this process on the standard basis $e_{-g}, \dots, e_g$, using the inner product $H$. Then we have the formulae
\begin{eqnarray*}
e_{-g} &=& e^\ast_{-g},\\ 
&\vdots&\\ 
e_j &=& e_j^\ast + \sum_{\substack{i = -g\\i\neq 0}}^{j-1} \mu_{i, j} e_i^\ast\\
& \vdots&\\
 e_g &=& e_g^\ast + \sum_{\substack{i = -g \\ i \neq 0}}^{g-1} \mu_{i, g} e_i^\ast, 
 \end{eqnarray*}
with $ \mu_{i, j} = ( e_j, e_i^\ast)_H / (e_i^\ast, e_i^\ast)_H$.
In particular, we take $\mu_{i, i} = 1$ and $\mu_{i, j} = 0$ if $i > j$. 
\begin{lemma}\label{lem_Gram_Schmidt}
Given an inner product $H$ compatible with with $S$, let $n = n_H = ( \mu_{i, j} )_{-g \leq i, j \leq g, i j \neq 0}$ and $a = a_H = \diag( \| e_i^\ast \|_H)_{-g \leq i \leq g, i \neq 0}$ as defined above. Then $a \in A$, $n \in N$, and $H = {}^t(an) an$. 
\end{lemma}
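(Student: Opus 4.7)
The plan is to reduce the lemma to the uniqueness of the Iwasawa decomposition for $\GL_{2g}(\bR)$. First I would observe that the identity $H = {}^t(an)(an)$ is essentially a restatement of the Gram--Schmidt procedure itself: if $\tilde e_i = e_i^\ast/\|e_i^\ast\|_H$ denotes the orthonormal basis obtained from Gram--Schmidt, then by the defining formulae for $a$ and $n$ one has $e_j = \sum_{i} (an)_{ij}\, \tilde e_i$, so $an$ is the change-of-basis matrix from $\{\tilde e_i\}$ to $\{e_i\}$. Since the Gram matrix of $H$ in the basis $\{\tilde e_i\}$ is the identity, its Gram matrix in the standard basis is ${}^t(an)(an) = H$. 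This identity holds whether or not $H$ is compatible with $S$, and is nothing more than the analogue of Lemma \ref{lem_Gram_Schmidt_for_SL_g} applied to the basis $e_{-g}, \ldots, e_{-1}, e_1, \ldots, e_g$ in this order. The same computation shows that $a$ is a positive diagonal matrix in $\GL_{2g}(\bR)$ and that $n$ is upper-triangular unipotent with respect to this ordering.

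The content specific to the symplectic case is the assertion that $a$ lies in the symplectic torus $A$ and $n$ in the symplectic unipotent $N$, rather than merely in the corresponding subgroups of $\GL_{2g}(\bR)$. For this I would invoke the Iwasawa decomposition for $\Sp_{2g}(\bR)$ recalled in \S \ref{s-set-up} together with Proposition \ref{prop_interpretation_of_symmetric_space}. Since $H$ is $S$-compatible, it corresponds to a point of $X_{\Sp_{2g}(\bR)}$, so by the Iwasawa decomposition for $\Sp_{2g}(\bR)$ there exist $a' \in A$, $n' \in N$ with $K a' n'$ equal to this point. Translating back along the bijection of Proposition \ref{prop_interpretation_of_symmetric_space} produces a second decomposition $H = {}^t(a'n')(a'n')$ of the same shape, now with $a'$ and $n'$ symplectic by construction.

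Finally, I would appeal to uniqueness of the Iwasawa decomposition in $\GL_{2g}(\bR)$: for any inner product $H$ on $\bR^{2g}$ there is at most one pair $(a_0, n_0)$ with $a_0$ positive diagonal, $n_0$ upper-triangular unipotent, and $H = {}^t(a_0 n_0)(a_0 n_0)$. This follows from the classical factorisation $\GL_{2g}(\bR) = \Or_{2g}(\bR) \cdot A^+_{\mathrm{full}} \cdot N_{\mathrm{full}}$ together with the fact that $\Or_{2g}(\bR)$ is exactly the stabiliser of the standard inner product under the right action $g \mapsto {}^t g\, g$. Applied to the two expressions $H = {}^t(an)(an) = {}^t(a'n')(a'n')$, this uniqueness forces $(a, n) = (a', n')$, and hence $a \in A$ and $n \in N$. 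No step of the argument presents a genuine obstacle; the lemma is really a bookkeeping corollary of the Iwasawa decomposition for $\Sp_{2g}$ combined with uniqueness in the ambient $\GL_{2g}(\bR)$, with the Gram--Schmidt description of $(a,n)$ carried over verbatim from the $\SL_g$ case.
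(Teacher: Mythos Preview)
Your proposal is correct and follows essentially the same approach as the paper. The paper's proof is a single sentence invoking Lemma~\ref{lem_Gram_Schmidt_for_SL_g} together with the containments $A \leq A_{\SL_{2g}}$ and $N \leq N_{\SL_{2g}}$; what you have written is exactly the unpacking of that sentence, namely: existence of a symplectic Iwasawa factor $(a',n')$ via Proposition~\ref{prop_interpretation_of_symmetric_space}, followed by uniqueness of the upper-triangular factorisation in the ambient group to force $(a,n)=(a',n')$.
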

\begin{proof}
This follows from Lemma \ref{lem_Gram_Schmidt_for_SL_g}, since the groups $A$, $N \leq \Sp_{2g}(\bR)$ are contained in the corresponding groups for $\SL_{2g}$ (which is why we chose the embedding $\Sp_{2g} \leq \SL_{2g}$ that we did). 
\end{proof}
We now discuss size reduction. We define $\omega \subset N$ to be the set of matrices $n$ such that $|n_{i, j}| \leq 1/2$ for each pair $(i, j)$ with either $-g \leq i < j \leq -1$ or $-g \leq i \leq -j \leq -1$. This set is compact as any element of $N$ is determined by these matrix entries. To show that $\omega$ is an allowable fundamental set, we describe the homomorphisms $u_\alpha : \bG_a \to \Sp_{2g}$ associated to $\alpha \in \Phi^+$.
 \begin{itemize} \item If $-g \leq i < j \leq -1$, and $\alpha = \varepsilon_i - \varepsilon_j$, then $u_\alpha(t) = 1 + t (E_{i, j} - E_{-i, -j})$ (where $E_{i, j}$ is the associated elementary matrix).  Right multiplication by $u_{\alpha}(t)$ corresponds to adding $t$ times column $i$ to column $j$ and $t$ times column $-i$ to column $-j$. 
 \item If $-g \leq i \leq -1$, and $\alpha = 2 \varepsilon_i$, then $u_\alpha(t) = 1 + t E_{i, -i}$. Right multiplication by $u_{\alpha}(t)$ corresponds to adding $t$ times column $i$ to column $-i$. 
 \item If $-g \leq i < j \leq -1$, and $\alpha = \varepsilon_i + \varepsilon_j$, then $u_\alpha(t) = 1 + t (E_{i, -j} + E_{j, -i})$. Right multiplication by $u_\alpha(t)$ corresponds to adding $t$ times column $i$ to column $-j$ and $t$ times column $j$ to column $-i$. 
 \end{itemize} 
If $n \in N$, we can produce an element $\gamma_n \in \bN(\bZ)$ such that $n \gamma_n \in \omega$ by successively carrying out column operations as follows: 
\begin{itemize}
\item For each $j = -g + 1, -g + 2, \dots, -2, -1$, successively make the entries $n_{i, j}$ (for $i = j-1, j - 2, \dots, -g + 1, -g$) lie in $[-1/2, 1/2)$ by multiplying on the right by $u_{\varep_i - \varep_j}(m_{i,j})$ for some integer $m_{i, j}$. 
\item For each $j = 1, 2, \dots, g$, successively make the entries $n_{i, j}$ (for $i = -j, -j - 1, \dots, -g$) lie in $[-1/2, 1/2)$ by multiplying on the right by $u_{\varep_i + \varep_{-j}}(m_{i, j})$ for some integer $m_{i, j}$. 
\end{itemize}
We observe that the order of these operations is such that, once we have forced a given matrix entry to lie in $[-1/2, 1/2]$, it is not disturbed by any later column operations. Finally we observe that $\omega$ is an allowable fundamental set because the homomorphism $p : N \to \bR^\Delta$ takes a matrix $n$ to its tuple of entries $(n_{-g, 1-g}, n_{1-g, 2-g}, \dots, n_{-2, -1}, n_{-1, 1})$.

We next make explicit the simple reflections $s_{-g}, \dots, s_{-1}$ respectively corresponding to the chosen simple roots $\varepsilon_i - \varepsilon_{i+1}$ ($i = -g, \dots, -2$), $2 \varepsilon_{-1}$, by describing their action on basis vectors: 
\begin{itemize}
\item For each $i = -g, \dots, -2$, $s_i$ sends $e_i$ to $-e_{i+1}$, $e_{i+1}$ to $e_i$, $e_{-i}$ to $e_{-(i+1)}$, $e_{-(i+1)}$ to $-e_i$, and fixes the remaining basis vectors.
\item $s_{-1}$ sends $e_{-1}$ to $-e_1$, $e_1$ to $e_{-1}$, and fixes the remaining basis vectors. 
\end{itemize}

We are now ready to give a more explicit version of Algorithm \ref{alg_LLL_for_G}.
\begingroup
\begin{algorithm}
\caption{Lattice reduction for $\Sp_{2g}$.} 
\begin{flushleft}
\textbf{Input:} A constant $\delta \in (1/4, 1)$, and a positive definite $2g \times 2g$ real symmetric matrix $H$ compatible with $S$.

\textbf{Output:} An element $\gamma \in \Sp_{2g}(\bZ)$ such that ${}^t \gamma H \gamma$ corresponds to a $(\delta, \omega)$-reduced point of $X_{\Sp_{2g}(\bR)}$ under the bijection of Proposition \ref{prop_interpretation_of_symmetric_space}.
\end{flushleft}
  \begin{algorithmic}[1]\label{alg_LLL_for_Sp_2g}
  \STATE Using Gram--Schmidt, compute $a \in A, n \in N$ such that $H = {}^t (an) an$. Set $\gamma \gets 1 \in \Sp_{2g}(\bZ)$, $a' \gets a$, $n' \gets n$, $k \gets -g$.
\WHILE{$k < - 1$}
\STATE Execute RED($k, k+1$).
\IF{ $\alpha_k(a')^2 > (\delta - (n'_{k, k+1})^2)^{-1}$ }
\STATE Execute REFL($k$).
\IF{$k > -g$}
\STATE $k \gets k-1$.
\ENDIF
\ELSE
\FOR{ $i = k-1$, $i \geq -g$, $i \gets i-1$ }
\STATE Execute RED($i, k+1$).
\ENDFOR 
\STATE Set $k \leftarrow k+1$.
\ENDIF
\ENDWHILE
\STATE Execute RED($-1, 1$). 
\IF{$\alpha_{-1}(a')^2 > (\delta - (n'_{-1, 1})^2)^{-1}$}
\STATE Execute REFL($-1$). Set $k \leftarrow k-1$.
\STATE \textbf{go to} 2.
\ELSE
\FOR{$i = -2$, $i \geq -g$, $i \gets i-1$}
\STATE Execute RED($i, 1$).
\ENDFOR
\FOR{$j = 2$, $j \leq g$, $j \gets j+1$}
\FOR{$i = -j$, $i \geq -g$, $i \gets i-1$}
\STATE Execute RED($i, j$).
\ENDFOR
\ENDFOR
\ENDIF
\RETURN $\gamma$.
\end{algorithmic}
\end{algorithm}
\begin{algorithm}
\caption{Subroutine RED($i, j$) for Algorithm \ref{alg_LLL_for_Sp_2g}.}
\begin{algorithmic}
\IF{$-g \leq i < j \leq -1$}
\STATE Let $m$ be the unique integer such that $n'_{i, j} + m \in [-1/2, 1/2)$. Set $n' \leftarrow n u_{\varepsilon_i - \varepsilon_j}(m)$, $\gamma \leftarrow \gamma u_{\varepsilon_i - \varepsilon_j}(m)$. 
\ELSIF{$-g \leq i \leq -j \leq -1$}
\STATE Let $m$ be the unique integer such that $n'_{i, j} + m \in [-1/2, 1/2)$. Set $n' \leftarrow n u_{\varepsilon_i + \varepsilon_{-j}}(m)$, $\gamma \leftarrow \gamma u_{\varepsilon_i + \varepsilon_{-j}}(m)$.
\ENDIF
\end{algorithmic}
\end{algorithm}
\begin{algorithm}
\caption{Subroutine REFL($i$) for Algorithm \ref{alg_LLL_for_Sp_2g}.}
\begin{algorithmic}
\IF{$-g \leq i \leq -2$}
\STATE Set $t \leftarrow n_{i, i+1}$, $m' \leftarrow u_{\varepsilon_{i} - \varepsilon_{i+1}}( t )^{-1} n'$, $n' \leftarrow u_{\varepsilon_i - \varepsilon_{i+1}}( (-t \alpha_i(a')^2 )/(1 + t^2 \alpha_i(a')^2) )s_i^{-1} m' s_i$, $a' \leftarrow \check{\alpha}_i(\sqrt{1 + t^2 \alpha_i(a')^2}) s_i(a')$, $\gamma \leftarrow \gamma s_i$.
\ELSIF{$i = -1$}
\STATE Set $t \leftarrow n_{-1, 1}$, $m' \leftarrow u_{2 \varepsilon_i}(t)^{-1} n'$, $n' \leftarrow u_{2 \varepsilon_i}( (-t \alpha_i(a')^2 )/(1 + t^2 \alpha_i(a')^2) )s_i^{-1} m' s_i$, $a' \leftarrow \check{\alpha}_i(\sqrt{1 + t^2 \alpha_i(a')^2}) s_i(a')$, $\gamma \leftarrow \gamma s_i$.  
\ENDIF
\end{algorithmic}
\end{algorithm}
\endgroup

Looking at the steps, we can we first carry out the algorithm of \cite{Len82} for the top left $g \times g$-submatrix of the element $an \in \Sp_{2g}(\bR)$, eventually reaching $k = -1$, where we have the possibility of acting by the simple reflection $s_{-1}$. This can be seen in terms of the Dynkin diagram of $\Sp_{2g}$: 
\begin{center}
\begin{tikzpicture}[transform shape, scale=.8]
\node[root] (a) {}; 
\node[root] (b) [right=of a] {};
\node[root] (c) [right=of b] {};
\node[root] (d) [right=of c] {};
\node[root] (e) [right=of d] {};
\node[root] (f) [right=of e] {};

\draw[thick] (a) -- (b) -- (c) ;
\draw[thick, dashed] (c) -- (d) ;
\draw[thick] (d) -- (e);
\draw[postaction={decorate}, 
  decoration={markings,mark=at position .8
  with {\arrow{angle 60}}}, thick, double distance=2pt] (e) -- (f);
\end{tikzpicture} 
\end{center} 
The group $\SL_g$ may be embedded in $\Sp_{2g}$ as a Levi subgroup (more precisely, as the Levi subgroup of block diagonal matrices, with blocks of size $g$), and its Dynkin diagram may be identified with the subdiagram of $\Sp_{2g}$ obtained by deleting the right-most vertex, which corresponds to the simple root $2 \varepsilon_{-1}$. From this point of view, we see that Algorithm \ref{alg_LLL_for_Sp_2g} is essentially the one described in \cite{Dec04}, modulo the translation between our interpretation of $X_{\Sp_{2g}}$ and the identification with the Siegel upper half-space. The choice of ordering of the simple reflections is arbitrary; if we take the reverse ordering, then we essentially recover \cite[Algorithm 4]{Gam06}. 

\section{The case $\bG = \SO_{2g}$}\label{sec_case_SO_2g}

We next make Algorithm \ref{alg_LLL_for_G} explicit in the case of an even orthogonal group. (The case of a split odd orthogonal group is rather similar to the case considered in \cite{Thorne}.)   Details about the root system can be found, for example, in \cite[\S 18]{FultonHarris}.

Let $g \geq 3$ be an integer, and let $e_{-g}, \dots, e_{-1}, e_1, \dots, e_g$ denote the standard basis of $\bR^{2g}$. Let $\Psi$ denote the $g \times g$ matrix with $1$'s on the antidiagonal and $0$'s elsewhere, and let
\begin{equation}\label{eqn_matrix_of_symmetric_form} S = \left( \begin{array}{cc} 0 & \Psi \\ \Psi & 0 \end{array}\right). 
\end{equation}
Then $S$ defines a bilinear form whose associated quadratic form is 
\[ q(v_{-g}, \dots, v_g) = \sum_{i=1}^g v_{-i} v_i. \]
We define
\[ \SO_{2g} = \{ x \in \SL_{2g} \mid q(xv) = q(v) \}. \]
This formula defines a split semisimple group over $\bZ$ (cf. \cite[Appendix C]{Con14}). A split maximal torus is 
\[ \bT = \{ \diag(t_{-g}, \dots, t_{-1}, t_{-1}^{-1}, \dots, t_{-g}^{-1}) \} \leq \SO_{2g}. \]
If $-g \leq i \leq g$ and $i \neq 0$, let $\varepsilon_i \in X^\ast(\bT)$ be the character sending $\diag(t_{-g}, \dots, t_g)$ to $t_i$. The roots $\Phi = \Phi(\SO_{2g}, \bT)$ are given by $\varepsilon_i - \varepsilon_j$ ($-g \leq i, j \leq -1$, $i \neq j$) and $\pm(\varepsilon_i + \varepsilon_j)$ ($-g \leq i < j \leq -1$). A root basis is given by $\Delta = \{ \varepsilon_i - \varepsilon_{i+1} \mid i = -g, \dots, -2 \} \sqcup \{ \varepsilon_{-1} + \varepsilon_{-2} \}$, with corresponding set of positive roots $\Phi^+ = \{ \varepsilon_i - \varepsilon_j \mid -g \leq i < j \leq -1 \} \cup \{ \varepsilon_i + \varepsilon_j \mid -g \leq i <j \leq -1 \}$. The corresponding subgroup $\bN \leq \SO_{2g}$ is the subgroup of unipotent upper-triangular matrices preserving $S$. The group $A = \bT(\bR)^\circ$ is the subgroup of diagonal matrices with positive entries preserving $S$. 

The Cartan involution $\theta_0 : \SO_{2g} \to \SO_{2g}$ is again given by the formula $\theta_0(x) = {}^t x^{-1}$. We write $K \leq \SO_{2g}(\bR)$ for its fixed-point subgroup, a maximal compact subgroup of $\SO_{2g}(\bR)$.
\begin{proposition}\label{prop_interpretation_of_symmetric_space_case_SO_2g}
There are $\SO_{2g}(\bR)$-equivariant bijections between the following sets:
\begin{enumerate}
\item The symmetric space $X_{\SO_{2g}(\bR)}$.
\item The set of inner products $H$ on $\bR^{2g}$ (equivalently, positive definite symmetric matrices in $M_{2g}(\bR)$) that are compatible with $S$, in the sense that $H^{-1} ({}^t S) = S^{-1} ({}^t H)$.
\end{enumerate}
\end{proposition}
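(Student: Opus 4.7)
The plan is to mirror the proof of Proposition \ref{prop_interpretation_of_symmetric_space} (the symplectic case) step by step. First I would define the right $\SO_{2g}(\bR)$-action on (2) by $H \cdot x = {}^t x H x$, and check that it preserves compatibility. For $x \in \SO_{2g}(\bR)$, the relation ${}^t x S x = S$ is equivalent to $x S \,{}^t x = S$, and combining this with ${}^t H = H$ and ${}^t S = S$ yields the claim by direct substitution.

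The map $(1) \to (2)$ then sends $Kx$ to ${}^t x H_0 x$, where $H_0 = I_{2g}$ is the standard inner product. To verify $H_0 \in (2)$, I would compute $S^2 = I_{2g}$ using $\Psi^2 = I_g$, so $S^{-1} = S$; the compatibility condition in this setting then takes the simplified form $H S H = S$, which is obvious for $H_0 = I_{2g}$. The stabiliser of $H_0$ in $\SO_{2g}(\bR)$ is $\{ x \in \SO_{2g}(\bR) : {}^t x x = I_{2g}\} = G^{\theta_0} = K$, directly from the definition of $\theta_0(x) = {}^t x^{-1}$. So the map is well-defined, $\SO_{2g}(\bR)$-equivariant, and injective.

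The main obstacle is surjectivity, equivalently the transitivity of the $\SO_{2g}(\bR)$-action on (2). The cleanest approach is to invoke the direct analogue of \cite[Lemma 7.11]{Grayson} for the split even orthogonal group, which is proved by essentially the same argument as in the symplectic case. As a hands-on alternative, I would proceed as follows. Given compatible $H$, taking determinants of $HSH = S$ yields $(\det H)^2 = 1$, hence $\det H = 1$, so $A := H^{1/2}$ is a positive definite symmetric element of $\SL_{2g}(\bR)$. A short manipulation using $HSH = S$ and $S^2 = I_{2g}$ shows $(ASA)^2 = I_{2g}$, so $B := ASA$ is a symmetric involution. By Sylvester's law of inertia $B$ has the same signature $(g, g)$ as $S$, so the spectral theorem produces $U \in \Or_{2g}(\bR)$ with ${}^t U S U = B$; then $x := UA$ satisfies ${}^t x x = A^2 = H$ and ${}^t x S x = A B A = A^2 S A^2 = S$. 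Replacing $U$ by $U \epsilon$ for a suitable orthogonal $\epsilon$ commuting with $B$ with $\det \epsilon = -1$ (such $\epsilon$ exist, since each $\pm 1$-eigenspace of $B$ has dimension $g$) ensures $\det x = +1$, giving the desired $x \in \SO_{2g}(\bR)$ with $H = {}^t x x$.
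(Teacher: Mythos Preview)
Your proposal is correct. The paper's own proof is a one-line citation (``proved in the same way as \cite[Proposition~3.1]{Thorne}''), so your approach---set up the map $Kx \mapsto {}^t x H_0 x$, identify the stabiliser as $K$, and then establish transitivity---is exactly the intended one, and your option of invoking the orthogonal analogue of \cite[Lemma~7.11]{Grayson} matches what the cited reference does.

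Your hands-on surjectivity argument is a genuine addition beyond what the paper records, and it is sound: the key points are that compatibility reduces to $HSH=S$ (using $S={}^tS$ and $S^2=I_{2g}$), that $A=H^{1/2}$ gives a symmetric involution $B=ASA$ of signature $(g,g)$, and that any two symmetric involutions of the same signature are orthogonally conjugate, producing $x=UA$ with the required properties. The determinant adjustment via an orthogonal $\epsilon$ commuting with $B$ is also fine, since such $\epsilon$ preserves both ${}^t(U\epsilon)(U\epsilon)=I_{2g}$ and ${}^t(U\epsilon)S(U\epsilon)=B$. This self-contained route avoids the need to look up the external reference, at the cost of a short linear-algebra computation; either is acceptable here.
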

\begin{proof}
This may be proved in the same way as \cite[Proposition 3.1]{Thorne}.
\end{proof}
The Iwasawa decomposition gives us a bjiection
\[ A \times N \to X_{\SO_{2g}(\bR)}, (a, n) \mapsto K a n. \]
It follows that any inner product $H$ compatible with $S$ can be expressed as ${}^t (an) an$ for a unique pair $(a, n) \in A \times N$. Just as in the case of the symplectic group, this pair may be computed using the Gram--Schmidt process.  Let $e^\ast_{-g}, \dots, e^\ast_g$ denote the result of carrying out this process on the standard basis $e_{-g}, \dots, e_g$, using the inner product $H$, and let $\mu_{i, j} = (e_j, e_i^\ast)_H/(e_i^\ast, e_i^\ast)_H$. 
\begin{lemma}\label{lem_Gram_Schmidt_SO}
Given an inner product $H$ compatible with $S$, let $n = n_H = ( \mu_{i, j} )_{-g \leq i, j \leq g, i j \neq 0}$ and $a = a_H = \diag( \| e_i^\ast \|_H)_{-g \leq i \leq g, i \neq 0}$ as defined above. Then $a \in A$, $n \in N$, and $H = {}^t(an) an$. 
\end{lemma}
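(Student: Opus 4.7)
The plan is to mirror the strategy used for $\Sp_{2g}$ in Lemma~\ref{lem_Gram_Schmidt} and reduce directly to Lemma~\ref{lem_Gram_Schmidt_for_SL_g}. The crucial observation is that, under the chosen matrix embedding $\SO_{2g} \hookrightarrow \SL_{2g}$, the split torus component $A \subset \SO_{2g}(\bR)$ (positive-entry diagonal matrices preserving $S$) is contained in the group of all positive-entry diagonal matrices in $\SL_{2g}(\bR)$, and the unipotent radical $N \subset \SO_{2g}(\bR)$ (unipotent upper-triangular matrices preserving $S$) is contained in the group of all unipotent upper-triangular matrices in $\SL_{2g}(\bR)$. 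These latter groups are precisely the $A$ and $N$ appearing in the $\SL_{2g}$ set-up of \S\ref{subsec_setup_in_case_SL_g}.

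First I would apply Lemma~\ref{lem_Gram_Schmidt_for_SL_g} to $H$, viewed simply as an inner product on $\bR^{2g}$ (temporarily forgetting its compatibility with $S$). This yields the positive-entry diagonal matrix $a_H = \diag(\|e^\ast_i\|_H)$ and the unipotent upper-triangular matrix $n_H = (\mu_{i,j})$ satisfying $H = {}^t(a_H n_H)(a_H n_H)$, with $a_H$ and $n_H$ sitting a priori only in $\GL_{2g}(\bR)$. What remains is to show that $a_H$ and $n_H$ in fact preserve $S$, i.e.\ that they lie in the $\SO_{2g}$ subgroups $A$ and $N$.

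To see this, I would invoke Proposition~\ref{prop_interpretation_of_symmetric_space_case_SO_2g}: since $H$ is compatible with $S$, there exists $x \in \SO_{2g}(\bR)$ with $H = {}^t x x$. Applying the Iwasawa decomposition for $\SO_{2g}(\bR)$ to $x$ produces $x = k\, a_0 n_0$ with $k \in K$, $a_0 \in A$, $n_0 \in N$, whence $H = {}^t(a_0 n_0)(a_0 n_0)$ as well. Both $(a_H, n_H)$ and $(a_0, n_0)$ are thus positive-diagonal-times-unipotent-upper-triangular factorisations yielding the same Gram matrix $H$; by the uniqueness clause of Gram--Schmidt (equivalently, uniqueness in the $\SL_{2g}(\bR)$ Iwasawa decomposition) we conclude $a_H = a_0$ and $n_H = n_0$, which forces both to lie in the $\SO_{2g}$ subgroups $A$ and $N$.

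I expect no genuine obstacle here: the entire content beyond Lemma~\ref{lem_Gram_Schmidt_for_SL_g} consists of the two matrix-level inclusions $A_{\SO_{2g}} \subset A_{\SL_{2g}}$ and $N_{\SO_{2g}} \subset N_{\SL_{2g}}$, which are immediate from the definitions, combined with the standard uniqueness of $\SL_{2g}$ Iwasawa. This is the same one-line reasoning the authors deploy for Lemma~\ref{lem_Gram_Schmidt}, and the choice of embedding $\SO_{2g} \leq \SL_{2g}$ was arranged precisely so that this reduction would go through.
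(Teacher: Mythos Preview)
Your proposal is correct and follows the same approach as the paper: the authors' proof simply states that it is identical to that of Lemma~\ref{lem_Gram_Schmidt}, which in turn reduces to Lemma~\ref{lem_Gram_Schmidt_for_SL_g} via the inclusions $A_{\SO_{2g}} \subset A_{\SL_{2g}}$ and $N_{\SO_{2g}} \subset N_{\SL_{2g}}$. You have merely spelled out the uniqueness step (via Proposition~\ref{prop_interpretation_of_symmetric_space_case_SO_2g} and the $\SO_{2g}$ Iwasawa decomposition) that the paper leaves implicit.
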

\begin{proof}
The proof is the same as for Lemma \ref{lem_Gram_Schmidt}. 
\end{proof}
We now discuss size reduction. We define $\omega \subset N$ to be the set of matrices $n$ such that $|n_{i, j}| \leq 1/2$ for each pair $(i, j)$ with either $-g \leq i < j \leq -1$ or $-g \leq i < -j \leq -1$. This set is compact as any element of $N$ is determined by these matrix entries. 
\begin{proposition}
The set $\omega$ is an allowable fundamental set. 
\end{proposition}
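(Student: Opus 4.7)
The plan is to verify the three conditions for an allowable fundamental set: $\omega$ is relatively compact, $\omega \bN(\bZ) = N$, and $p(\omega) \subset [-1/2, 1/2]^\Delta$. Relative compactness has already been noted in the statement. For the condition $p(\omega) \subset [-1/2, 1/2]^\Delta$, I would fix root vectors $X_{\varepsilon_i - \varepsilon_{i+1}} = E_{i, i+1} - E_{-(i+1), -i}$ for $-g \leq i \leq -2$, and $X_{\varepsilon_{-1} + \varepsilon_{-2}} = E_{-2, 1} - E_{-1, 2}$. Then for $n \in N$, the value $p_\alpha(n)$ equals $n_{i, i+1}$ or $n_{-2, 1}$ respectively, each of which is constrained to lie in $[-1/2, 1/2]$ by definition of $\omega$ (the first via the first-family condition, the second via the second-family condition with $(i, j) = (-2, 1)$).

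The main task is to show $\omega \bN(\bZ) = N$, for which I would give an explicit size-reduction procedure analogous to the $\Sp_{2g}$ case. Right multiplication by $u_{\varepsilon_i - \varepsilon_j}(m)$ (with $-g \leq i < j \leq -1$) adds $m$ times column $i$ to column $j$ and subtracts $m$ times column $-j$ from column $-i$; right multiplication by $u_{\varepsilon_i + \varepsilon_j}(m)$ (with $-g \leq i < j \leq -1$) adds $m$ times column $i$ to column $-j$ and subtracts $m$ times column $j$ from column $-i$. Given $n \in N$, I would proceed in two phases. In Phase~1, for $j = -g+1, \dots, -1$ in increasing order and, for each such $j$, for $i = j-1, j-2, \dots, -g$ in decreasing order, right-multiply by $u_{\varepsilon_i - \varepsilon_j}(m)$ for the unique integer $m$ that moves the new $(i, j)$-entry into $[-1/2, 1/2)$. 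In Phase~2, for $j = 1, 2, \dots, g-1$ in increasing order and, for each such $j$, for $i = -j-1, \dots, -g$ in decreasing order, right-multiply by $u_{\varepsilon_i + \varepsilon_{-j}}(m)$ analogously. The resulting $\gamma$ lies in $\bN(\bZ)$ and satisfies $n\gamma \in \omega$.

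The hard part will be verifying that the chosen order of operations does not undo earlier reductions. The key points are: (a)~since $n$ is upper triangular, right multiplication by $u_\alpha(m)$ changes entries $n_{k, j}$ only at rows $k \leq i$ and changes entries $n_{k, -i}$ only at rows $k < -j$, so within a single column the decreasing order on $i$ preserves earlier reductions. (b)~Phase~1 only alters columns of negative index $j$ and of positive index $-i$, and positive-index columns contain no first-family distinguished entries, so Phase~1 reductions survive through Phase~2. (c)~During Phase~2 the condition $i < -j$ gives $-i > j$, so the column $-i$ being modified is positive-indexed and has not yet been the target of a reduction step in Phase~2 (since $j$ is processed in increasing order). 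Combining these observations yields the result.
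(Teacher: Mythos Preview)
Your proposal is correct and follows essentially the same approach as the paper: both argue via an explicit two-phase sequence of column operations (first handling entries $n_{i,j}$ with $-g\le i<j\le -1$ using $u_{\varepsilon_i-\varepsilon_j}$, then entries with $-g\le i<-j\le -1$ using $u_{\varepsilon_i+\varepsilon_{-j}}$), with the same ordering on $j$ and on $i$. Your justification of non-interference is more detailed than the paper's one-line observation, and you also explicitly verify the allowability condition $p(\omega)\subset[-1/2,1/2]^\Delta$, which the paper's proof of this proposition omits (it is handled in the parallel $\Sp_{2g}$ discussion).
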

\begin{proof}
Let $n \in N$. We construct $\gamma_n$ such that $n\gamma_n \in \omega$ by successively performing column operations corresponding to elements $u_\alpha(m) \in \bN(\bZ)$ for $\alpha \in \Phi^+$. More explicitly:
\begin{itemize}
\item For each $j = -g + 1, \dots, -2, -1$, successively make the entries $n_{i, j}$ (for $i = j-1, j - 2, \dots, -g$) lie in $[-1/2, 1/2)$ by multiplying on the right by $u_{\varepsilon_i - \varepsilon_j}(m_{i, j})$ for some integer $m_{i, j}$. 
\item For each $j = 1, 2, \dots, g$, successively make the entries $n_{i, j}$ (for $i = -j - 1, -j - 2, \dots, -g$) lie in $[-1/2, 1/2)$ by multiplying on the right by $u_{\varepsilon_i + \varepsilon_{-j}}(m_{i, j})$ for some integer $m_{i, j}$. 
\end{itemize}
We observe that the order of these operations is such that, once we have forced a given matrix entry to lie in $[-1/2, 1/2)$, it is not disturbed by any later column operations. 
 \end{proof}
We next make explicit the simple reflections $s_{-g}, \dots, s_{-1}$ respectively corresponding to the chosen simple roots $\varepsilon_i - \varepsilon_{i+1}$ ($i = -g, \dots, -2$), $\varepsilon_{-1} + \varepsilon_{-2}$, by describing their action on basis vectors: 
\begin{itemize}
\item For each $i = -g, \dots, -2$, $s_i$ sends $e_i$ to $-e_{i+1}$, $e_{i+1}$ to $e_i$, $e_{-i}$ to $e_{-(i+1)}$, $e_{-(i+1)}$ to $-e_i$, and fixes the remaining basis vectors.
\item $s_{-1}$ sends $e_{-1}$ to $-e_2$, $e_{-2}$ to $e_1$, $e_1$ to $-e_{-2}$, $e_2$ to $e_{-1}$, and fixes the remaining basis vectors. 
\end{itemize}
We are now ready to give a more explicit version of Algorithm \ref{alg_LLL_for_G} in this case.
\begingroup
\begin{algorithm}
\caption{Lattice reduction for $\SO_{2g}$.} 
\begin{flushleft}
\textbf{Input:} A constant $\delta \in (1/4, 1)$, and a positive definite $2g \times 2g$ real symmetric matrix $H$ compatible with $S$.

\textbf{Output:} An element $\gamma \in \SO_{2g}(\bZ)$ such that ${}^t \gamma H \gamma$ corresponds to a $(\delta, \omega)$-reduced point of $X_{\SO_{2g}(\bR)}$ under the bijection of Proposition \ref{prop_interpretation_of_symmetric_space_case_SO_2g}.
\end{flushleft}
  \begin{algorithmic}[1]\label{alg_LLL_for_SO_2g}
  \STATE Using Gram--Schmidt, compute $a \in A, n \in N$ such that $H = {}^t (an) an$. Set $\gamma \gets 1 \in \SO_{2g}(\bZ)$, $a' \gets a$, $n' \gets n$, $k \gets -g$.
\WHILE{$k < - 1$}
\STATE Execute RED($k, k+1$).
\IF{ $\alpha_k(a')^2 > (\delta - (n'_{k, k+1})^2)^{-1}$ }
\STATE Execute REFL($k$).
\IF{$k > -g$}
\STATE $k \gets k-1$.
\ENDIF
\ELSE
\FOR{ $i = k-1$, $i \geq -g$, $i \gets i-1$ }
\STATE Execute RED($i, k+1$).
\ENDFOR 
\STATE Set $k \leftarrow k+1$.
\ENDIF
\ENDWHILE
\STATE Execute RED($-2, 1$). 
\IF{$\alpha_{-1}(a')^2 > (\delta - (n'_{-1, 1})^2)^{-1}$}
\STATE Execute REFL($-1$). Set $k \leftarrow k-2$.
\STATE \textbf{go to} 2.
\ELSE
\FOR{$i = -3$, $i \geq -g$, $i \gets i-1$}
\STATE Execute RED($i, 1$).
\ENDFOR
\FOR{$j = 2$, $j \leq g$, $j \gets j+1$}
\FOR{$i = -j-1$, $i \geq -g $, $i \gets i-1$}
\STATE Execute RED($i, j$).
\ENDFOR
\ENDFOR
\ENDIF
\RETURN $\gamma$.
\end{algorithmic}
\end{algorithm}
\begin{algorithm}
\caption{Subroutine RED($i, j$) for Algorithm \ref{alg_LLL_for_SO_2g}.}
\begin{algorithmic}
\IF{ $-g \leq i < j \leq -1$ }
\STATE Let $m$ be the unique integer such that $n'_{i, j} + m \in [-1/2, 1/2)$. Set $n' \gets n u_{\varepsilon_i - \varepsilon_j}(m)$, $\gamma \leftarrow \gamma u_{\varepsilon_i - \varepsilon_j}(m)$. 
\ELSIF{ $-g \leq i < -j \leq -1$}
\STATE Let $m$ be the unique integer such that $n'_{i, j} + m \in [-1/2, 1/2)$. Set $n' \leftarrow n u_{\varepsilon_i + \varepsilon_{-j}}(m)$, $\gamma \leftarrow \gamma u_{\varepsilon_i + \varepsilon_{-j}}(m)$.
\ENDIF
\end{algorithmic}
\end{algorithm}
\begin{algorithm}
\caption{Subroutine REFL($i$) for Algorithm \ref{alg_LLL_for_SO_2g}.}
\begin{algorithmic}
\IF{$-g \leq i \leq -2$} 
\STATE Set $t \leftarrow n_{i, i+1}$, $m' \leftarrow u_{\varepsilon_{i} - \varepsilon_{i+1}}( t )^{-1} n'$, $n' \leftarrow u_{\varepsilon_i - \varepsilon_{i+1}}( (-t \alpha_i(a')^2 )/(1 + t^2 \alpha_i(a')^2) )s_i^{-1} m' s_i$, $a' \leftarrow \check{\alpha}_i(\sqrt{1 + t^2 \alpha_i(a')^2}) s_i(a')$, $\gamma \leftarrow \gamma s_i$.
\ELSIF{$i = -1$} 
\STATE
Set $t \leftarrow n_{-1, 1}$, $m' \leftarrow u_{2 \varepsilon_i}(t)^{-1} n'$, $n' \leftarrow u_{\varepsilon_{-g+1}+\varepsilon_{-g}}( (-t \alpha_i(a')^2 )/(1 + t^2 \alpha_i(a')^2) )s_i^{-1} m' s_i$, $a' \leftarrow \check{\alpha}_i(\sqrt{1 + t^2 \alpha_i(a')^2}) s_i(a')$, $\gamma \leftarrow \gamma s_i$.  
\ENDIF
\end{algorithmic}
\end{algorithm}
\endgroup

We illustrate the algorithm with a numerical example, similar in nature to the one considered in \cite{Thorne} for the odd orthogonal group. The group $\bG$ acts on the space $V$ of self-adjoint $2g \times 2g$ matrices $T$ of trace 0. In the paper \cite{Sha18}, the $\bG(\bZ)$-orbits of matrices in $V(\bZ)$ with fixed separable characteristic polynomial $f(X) \in \bZ[X]$ are related to elements of the 2-Selmer group of the Jacobian of the hyperelliptic curve $Y^2 = f(X)$. The main construction of \cite{Thorne} gives a $G$-equivariant map $V(\bR)^s \to X_{\SO_{2g}(\bR)}$, $T \mapsto H_T$, where $V(\bR)^s \subset V(\bR)$ is the open subset of real matrices whose characteristic polynomial is separable. Reducing $H_T$ is a method to simplify $T$.

To illustrate, take $g = 4$ and consider the matrix
\[ \Scale[0.7]{ T =  \left(
\begin{array}{cccccccc}
 -14 & -200 & -2808 & -456 & -39095 & 5612 & 11183 & -11932 \\
 1 & 0 & 0 & 0 & 0 & -\frac{117}{2} & 10 & 11183 \\
 0 & 1 & 0 & 2 & 1 & -48 & -\frac{117}{2} & 5612 \\
 0 & 0 & \frac{1}{2} & 14 & 0 & 1 & 0 & -39095 \\
 0 & 0 & 1 & 0 & 14 & 2 & 0 & -456 \\
 0 & 0 & 0 & 1 & \frac{1}{2} & 0 & 0 & -2808 \\
 0 & 0 & 0 & 0 & 0 & 1 & 0 & -200 \\
 0 & 0 & 0 & 0 & 0 & 0 & 1 & -14 \\
\end{array}
\right),} \]
which is associated to the integral point $P = (14, 38867)$ of the genus-3 hyperelliptic curve
\[ Y^2 = X^8 + 4 X^6 + 8 X^5 + 11 X^4 + 4 X^3 + 2 X^2 - 4 X + 9. \]
(More precisely, the proof of \cite[Theorem 10]{Sha18} shows how rational points of the Jacobian of the curve give rise to $\bG(\mathbb{Q})$-orbits; the proof of \cite[Corollary 13]{Sha18} yields, in the simple case of an orbit corresponding to an integral point on the curve, an easy construction of a rational representative for this orbit whose denominators are bounded powers of $2$.) The reduction covariant $H_T \in X_{\SO_8(\bR)}$ may be computed using the same method as in \cite[\S 3]{Thorne}; we obtain the inner product on $\bR^8$, compatible with $S$, given (to 6 significant figures) by the matrix
\[ \Scale[0.7]{ H_T = \left(
\begin{array}{cccccccc}
 1.59851 & 22.1297 & 315.941 & 50.4751 & 4410.11 & -618.164 & -1277.19 & 604.661 \\
 22.1297 & 308.294 & 4400.11 & 702.244 & 61426.0 & -8598.70 & -17795.7 & 8380.20 \\
 315.941 & 4400.11 & 62804.1 & 10023.2 & 876747. & -122731. & -253994. & 119635. \\
 50.4751 & 702.244 & 10023.2 & 1600.20 & 139922. & -19594.8 & -40535.0 & 19115.7 \\
 4410.11 & 61426.0 & 876747. & 139922. & 1.22394\times 10^7 & -1.71329\times 10^6 & -3.54581\times 10^6 & 1.66992\times 10^6 \\
 -618.164 & -8598.70 & -122731. & -19594.8 & -1.71329\times 10^6 & 239945. & 496327. & -234115. \\
 -1277.19 & -17795.7 & -253994. & -40535.0 & -3.54581\times 10^6 & 496327. & 1.02737\times 10^6 & -483535. \\
 604.661 & 8380.20 & 119635. & 19115.7 & 1.66992\times 10^6 & -234115. & -483535. & 229541. \\
\end{array}
\right). } \]
We apply Algorithm \ref{alg_LLL_for_SO_2g} with $\delta = 0.9$ to the matrix $H_T$; this yields the element
\[ \Scale[0.7]{ \gamma = \left(
\begin{array}{cccccccc}
 4 & 0 & -13 & -3 & 378 & 18 & -22 & 342 \\
 2 & -4 & 58 & -2 & -811 & 1 & -15 & -870 \\
 0 & 0 & 24 & 0 & -392 & 0 & 1 & -402 \\
 -1 & 14 & -185 & 1 & 2793 & -1 & 1 & 2992 \\
 0 & 0 & -2 & 0 & 32 & 0 & 0 & 33 \\
 0 & 1 & -13 & 0 & 200 & 0 & 0 & 214 \\
 0 & 0 & -1 & 0 & 14 & 0 & 0 & 15 \\
 0 & 0 & 0 & 0 & 1 & 0 & 0 & 1 \\
\end{array}
\right) \in \SO_8(\bZ),} \]
which has the property that ${}^t \gamma H_T \gamma = H_{\gamma^{-1} T \gamma}$ is $\delta$-reduced. We finally compute
\[ \Scale[0.7]{ \gamma^{-1} T \gamma = \left(
\begin{array}{cccccccc}
 0 & \frac{3}{2} & -\frac{1}{2} & 1 & 2 & 0 & \frac{1}{2} & -2 \\
 -1 & 1 & 1 & 1 & 1 & -1 & 2 & \frac{1}{2} \\
 0 & -1 & -1 & 0 & 0 & 0 & -1 & 0 \\
 0 & 0 & 1 & 0 & 0 & 0 & 1 & 2 \\
 0 & 0 & -1 & 0 & 0 & 0 & 1 & 1 \\
 0 & \frac{1}{2} & -1 & -1 & 1 & -1 & 1 & -\frac{1}{2} \\
 0 & 0 & \frac{1}{2} & 0 & 0 & -1 & 1 & \frac{3}{2} \\
 0 & 0 & 0 & 0 & 0 & 0 & -1 & 0 \\
\end{array}
\right).} \]
Ideally we would like to generalise this example by allowing elements of $V(\bZ)$ corresponding to arbitrary rational points of the Jacobian of the hyperelliptic curve $Y^2 = f(X)$. It is a relatively simple matter to write down elements of $V(\mathbb{Q})$ corresponding to such points (for example, by using a construction similar to the one given in \cite{Tho14} for odd hyperelliptic curves), but showing these elements are $\bG(\mathbb{Q})$-conjugate to points of $V(\bZ)$ requires a minimisation algorithm generalising those given in \cite{Cre10} for elliptic curves. Such an algorithm has been given for curves of genus 2 by Fisher--Liu \cite{Fis23b}, although in this case our algorithm is not needed due to the existence of an exceptional isomorphism $\mathrm{PSO}_6 \cong \PGL_4$ (see \cite[Remark 4.3]{Fis23a}), a phenomenon that does not persist when $2g > 6$.

\section{The case $\bG = G_2$}\label{sec_case_G_2}

We finally make Algorithm \ref{alg_LLL_for_G} explicit in the case of the exceptional group of type $G_2$, which can be realised as the group of automorphisms of the split octonions. We first give a description of this group scheme and its associated symmetric space in terms of octonions, following \cite{Gan03} (see also \cite{KMRT} and \cite{SpringerVeldkamp}). Let $\mathbb{A} = M_2(\bZ)$, and let $\ast$ denote the (anti-)involution of $\bA$ that sends a matrix $X$ to its adjugate $X^\ast$. The Cayley--Dickson realisation of the split octonions is $\bO = \bA \oplus \bA$, equipped with the multiplication
\[ (x, y) \cdot (z, w) = (xz - w y^\ast, x^\ast w + z y). \]
As is well-known, multiplication in $\bO$ is neither commutative nor associative, but is alternative (i.e. satisfies the identities $(a a) b = a ( ab)$ and $(a b) b = a ( b b)$ for all $a, b \in \bO$). We define a quadratic form $q$ on $\bO$ by the formula $q(x, y) = \det(x) + \det(y)$. This quadratic form is multiplicative, in the sense that for all $a, b \in \bO$, we have $q(ab) = q(a) q(b)$. 

We define  $\bG = G_2 \leq \GL(\bO)$ to be the group scheme over $\bZ$ of automorphisms of the triple $(\bO, q, \cdot)$. Thus for any ring $R$, $\bG(R) \leq \GL(\bO \otimes_\bZ R)$ is the group of automorphisms of the base extension $(\bO \otimes_\bZ R, q_R, \cdot_R)$.  Then $\bG$ is a subgroup of the split reductive group $\SO(q)$, and is a split semisimple group of type $G_2$ \cite[\S 6]{Gan03}. 

We fix a $\bZ$-basis of $\bO$ with respect to which the matrix of $q$ equals the matrix $S$ of (\ref{eqn_matrix_of_symmetric_form}). This will determine an embedding $\bG \to \SO_8$ into the group considered in \S \ref{sec_case_SO_2g}. Such a basis is given by the elements $e_1, \dots, e_8 \in \bO$, where
\[ e_1 = \left( \left(\begin{array}{cc} 0 & 0 \\ 1 & 0 \end{array}\right), 0 \right),\; e_2= \left( 0, \left(\begin{array}{cc} 0 & 0 \\ 1 & 0 \end{array}\right) \right), \; e_3 = \left(0,  \left(\begin{array}{cc} 0 & 0 \\ 0 & 1 \end{array}\right) \right),\; e_4 = \left( \left(\begin{array}{cc} 0 & 0 \\ 0& 1 \end{array}\right), 0 \right), \]
\[ e_5 = \left( \left(\begin{array}{cc} 1 & 0 \\ 0 & 0 \end{array}\right), 0 \right), \; e_6= \left(0, \left(\begin{array}{cc} 1 & 0 \\ 0 & 0 \end{array}\right) \right), \; e_7 = \left(0,  \left(\begin{array}{cc} 0 & -1\\ 0 & 0\end{array}\right) \right), \; e_8 = \left( \left(\begin{array}{cc} 0 & -1 \\ 0& 0 \end{array}\right), 0 \right). \]
A split maximal torus in $\bG$ is given by the intersection $\bT$ of $\bG$ with the diagonal maximal torus of $\SO_8$. Explicitly, this is
\[ \bT = \{ \diag(st, s, t, 1, 1, 1/t, 1/s, 1/(st)) \mid s, t \in \bG_m \}. \]
Given $x = \diag(st, s, t, 1, 1, 1/t, 1/s, 1/(st))$, a root basis is $\Delta = \{ \alpha_1(x) = t, \alpha_2(x) = s / t\}$. The associated system of positive roots is given by
\[ \Phi^+ = \{ \alpha_1, \alpha_2, \alpha_3(x) = s, \alpha_4(x) = st, \alpha_5(x) = s t^2, \alpha_6(x) = s^2 t \}. \]
The simple coroots are defined by
\[ \check{\alpha}_1(t) = \diag(t, 1/t, t^2, 1, 1, 1/t^2, t, 1/t), \,\,\check{\alpha}_2(t) = \diag(1, t, 1/t, 1, 1, t, 1/t, 1). \]
The group $\bN$ is the intersection of the corresponding subgroup of $\SO_8$  with $\bG_2$: equivalently, the group of automorphisms of $\bO$ that are unipotent upper-triangular in the basis $e_1, \dots, e_8$. 

We have thus far defined $A = \bT(\bR)^\circ$ and $N = \bN(\bR)$. The Cartan involution $\theta_0$ of Proposition \ref{prop_special_Cartan_involution} is the restriction of the Cartan involution $x \mapsto {}^t x^{-1}$ of $\SO_8(\bR)$ (which leaves $G = G_2(\bR)$ stable). Its fixed points form a maximal compact subgroup $K \leq G_2(\bR)$. We can now state the following proposition, giving a concrete interpretation of the symmetric space of $\bG$:
\begin{proposition}\label{prop_interpretation_of_symmetric_space_G_2}
There is a $G_2(\bR)$-equivariant bijection between the following two sets:
\begin{enumerate} 
\item The set $X_{G_2(\bR)} = K \backslash G$.
\item The set of inner products $H$ on $\bO_\bR$ that are compatible with $q$ (in the sense that $H^{-1} ({}^t S) = S^{-1} ({}^t H)$) and with $\cdot$ (in the sense that for any $x, y \in \bO_\bR$, we have $\| x \cdot y \|_H \leq \| x \|_H \| y \|_H$).
\end{enumerate}
\end{proposition}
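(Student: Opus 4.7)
The plan is to follow the template of Propositions \ref{prop_interpretation_of_symmetric_space} and \ref{prop_interpretation_of_symmetric_space_case_SO_2g}: equip the set (2) with the right $G_2(\bR)$-action $H \cdot x = {}^t x H x$, and exhibit the claimed bijection as the orbit map $Kx \mapsto {}^t x H_0 x$, where $H_0$ is the inner product on $\bO_\bR$ whose Gram matrix in the basis $e_1, \dots, e_8$ is the identity.

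The routine verifications break into three parts. First, the action is well-defined on (2). Since $\bG \leq \SO(q)$, any $x \in G_2(\bR)$ satisfies ${}^t x S x = S$; using also $S^2 = I$ (which follows from $\Psi^2 = I$), a direct manipulation shows that ${}^t x H x$ is compatible with $q$ whenever $H$ is. Moreover, since $x$ is an algebra automorphism of $\bO_\bR$, the chain $\| x(a) \cdot x(b) \|_H = \| x(a \cdot b) \|_H \leq \| x(a) \|_H \| x(b) \|_H$ gives the submultiplicativity condition for ${}^t x H x$. Second, $H_0$ itself lies in (2): compatibility with $q$ reduces to $S^2 = I$, and compatibility with $\cdot$ is a concrete finite check against the multiplication table of $\bO$ in the chosen basis (which can also be read off the Cayley--Dickson description of $\bO$, where the relevant real subalgebra carries its usual multiplicative norm). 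Third, the stabiliser of $H_0$ under the action is $G_2(\bR) \cap \Or_8(\bR) = G_2(\bR)^{\theta_0} = K$, by the construction of $\theta_0$ in \S \ref{s-set-up} as the restriction of $x \mapsto {}^t x^{-1}$.

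The main obstacle is surjectivity: given $H \in (2)$, produce $x \in G_2(\bR)$ with ${}^t x H x = H_0$. Mimicking the strategy for the classical groups, I would associate to $H$ the involution $\theta_H(g) = H^{-1} \, {}^t g^{-1} \, H$ of $\GL(\bO_\bR)$. Compatibility of $H$ with $q$ forces $\theta_H$ to preserve $\SO(q)_\bR$ and to restrict to a Cartan involution there, as its fixed-point subgroup $\SO(q)(\bR) \cap \Or(H)$ is compact. The crucial claim -- and the hardest step of the argument -- is that compatibility of $H$ with $\cdot$ forces $\theta_H$ also to preserve the subgroup $G_2(\bR) \leq \SO(q)(\bR)$; equivalently, that the $H$-adjoint of an octonion algebra automorphism is again an algebra automorphism. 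I expect this to follow from the submultiplicativity of $H$ together with the alternative law and the non-degeneracy of $q$, via the fact that the trilinear form $(a, b, c) \mapsto (a \cdot b, c)_H$ carries enough cyclic symmetry to transport the automorphism property from $g$ to its $H$-adjoint. Granting this, $\theta_H|_{G_2(\bR)}$ is a Cartan involution of $G_2(\bR)$, and conjugacy of Cartan involutions (cf.\ \cite{Knapp}) yields an element $x \in G_2(\bR)$ with $\theta_H = \Ad(x) \circ \theta_0 \circ \Ad(x)^{-1}$. Then $H$ and ${}^t x H_0 x$ determine the same involution of $\GL(\bO_\bR)$, so they agree up to a positive scalar, and positive-definiteness together with compatibility with $q$ forces the scalar to be $1$.
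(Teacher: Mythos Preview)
The paper's proof is a one-line citation to Gan \cite[Corollary 14.5]{Gan03} (together with \cite[Proposition 14.3]{Gan03} for the fact that $H_0$ lies in the set (2)); it does not attempt a self-contained argument. Your proposal is therefore a genuinely different route, and the routine parts (the action is well-defined, $H_0$ lies in (2), the stabiliser of $H_0$ is $K$) are correct and cleanly handled.

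The difficulty is surjectivity, and here there is a real gap that you yourself flag. You want to pass from the \emph{inequality} $\|a\cdot b\|_H \le \|a\|_H\|b\|_H$ to the \emph{algebraic} statement that the $H$-adjoint of an octonion automorphism is again an automorphism (equivalently, that $\theta_H$ preserves $G_2(\bR)$). The heuristic you offer---``cyclic symmetry of the trilinear form $(a\cdot b,c)_H$''---is not a consequence of submultiplicativity alone: an inequality on norms does not produce an identity of trilinear forms. What actually underlies Gan's result is that a submultiplicative $H$ compatible with $q$ singles out a \emph{positive} (Cartan) involution of the composition algebra $\bO_\bR$, hence a compact real form of the octonions; transitivity then comes from the uniqueness of the compact real form. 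Reproducing that inside your argument would amount to re-proving the cited corollary.

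There is also a subtlety in your final step. Even granting that $\theta_H$ restricts to a Cartan involution of $G_2(\bR)$ and is therefore conjugate to $\theta_0$, you cannot immediately conclude that $H$ and ${}^t x^{-1} H_0 x^{-1}$ agree up to a single positive scalar: the $8$-dimensional representation of $G_2$ on $\bO_\bR$ is not irreducible but splits as $\mathbf{1}\oplus V_7$ (the span of the identity and the imaginary octonions). Two inner products inducing the same involution of $G_2(\bR)$ can therefore differ by independent positive scalars on the two summands, and you would need to invoke compatibility with $q$ (and perhaps with $\cdot$) more carefully to pin both scalars down. This is fixable, but it is not as automatic as in the $\SO_{2g}$ case, where the defining representation is irreducible.
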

\begin{proof}
This follows from \cite[Corollary 14.5]{Gan03}. The base-point of $X_{G_2(\bR)}$ corresponds to the inner product $H_0$ whose Gram matrix (in the basis $e_1, \dots, e_8$) is the $8 \times 8$ identity matrix (noting that \cite[Proposition 14.3]{Gan03} shows that this indeed defines an inner product compatible with $q$ and  $\cdot$).
\end{proof}
The Iwasawa decomposition gives us a bjiection
\[ A \times N \to X_{G_2(\bR)}, (a, n) \mapsto K a n. \]
It follows that any inner product $H$ on $\bO_\bR$ compatible with $q$ and $\cdot$ can be expressed as ${}^t (an) an$ for a unique pair $(a, n) \in A \times N$. Just as in the classical cases above, this can be computed using the Gram--Schmidt orthogonalisation process.  Let $e^\ast_{1}, \dots, e^\ast_8$ denote the result of carrying out this process on the standard basis $e_1, \dots, e_8$, using the inner product $H$, and let $\mu_{i, j} = (e_j, e_i^\ast)_H/(e_i^\ast, e_i^\ast)_H$. 
\begin{lemma}\label{lem_Gram_Schmidt_G_2}
With notation as above, let $n = n_H = ( \mu_{i, j} )_{1 \leq i, j \leq 8}$, $a = a_H = \diag( \| e_i^\ast \|_H)_{1 \leq i \leq 8}$. Then $a \in A$, $n \in N$, and $H = {}^t(an) an$. 
\end{lemma}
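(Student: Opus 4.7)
The plan is to reduce the claim to the uniqueness of the Cholesky/Gram--Schmidt decomposition of a positive definite symmetric matrix, combined with the Iwasawa decomposition for $G_2(\bR)$ itself.

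First, I would invoke Proposition~\ref{prop_interpretation_of_symmetric_space_G_2}: since $H$ is compatible with both $q$ and $\cdot$, there exists $g \in G_2(\bR)$ with $H = {}^t g\, H_0\, g = {}^t g\, g$, the second equality using that the Gram matrix of $H_0$ in the basis $e_1, \dots, e_8$ is the $8 \times 8$ identity. Next, I would apply the Iwasawa decomposition for $G_2(\bR)$ recalled in \S\ref{s-set-up}, writing $g = k a' n'$ uniquely with $k \in K$, $a' \in A$ and $n' \in N$ (all denoting the $G_2$-subgroups defined in this section). Since $\theta_0(x) = {}^t x^{-1}$, the group $K$ is contained in $\SO_8(\bR)$, so ${}^t k\, k = 1$ and hence
\[ H = {}^t(a'n')(a'n'). \]

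The remaining task is to match this decomposition with the one produced by Gram--Schmidt. The product $a' n'$ is upper-triangular with positive diagonal entries, since $A$ sits inside the positive-diagonal torus of $\SL_8$ and $N$ consists of upper-triangular unipotent matrices. On the other hand, applying Lemma~\ref{lem_Gram_Schmidt_for_SL_g} to $H$ viewed simply as a positive definite inner product on $\bR^8$, the matrix $a_H n_H$ is also upper-triangular with positive diagonal and satisfies $H = {}^t(a_H n_H)(a_H n_H)$. The uniqueness of Cholesky decomposition (which follows from uniqueness of Gram--Schmidt) forces $a_H n_H = a' n'$; separating the diagonal from the strictly upper-triangular part gives $a_H = a' \in A$ and $n_H = n' \in N$, and the identity $H = {}^t(a_H n_H)(a_H n_H)$ is already in hand.

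The only thing to verify carefully is that the two uniqueness statements — Iwasawa in $G_2$ and Gram--Schmidt in $\SL_8$ — can indeed be matched. This reduces to the observation that $K \subseteq \SO_8(\bR)$, so that both decompositions express $H$ in the common form ${}^t L\, L$ with $L$ upper-triangular and positive-diagonal. I do not expect any real obstacle beyond this bookkeeping: the argument is essentially parallel to those given for Lemma~\ref{lem_Gram_Schmidt} and Lemma~\ref{lem_Gram_Schmidt_SO} in the classical cases.
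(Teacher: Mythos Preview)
Your proposal is correct and follows essentially the same approach as the paper's own proof, which simply refers back to Lemma~\ref{lem_Gram_Schmidt} and notes that the embedding $G_2 \leq \SO_8$ was chosen so that the $G_2$-subgroups $A$ and $N$ sit inside the corresponding subgroups of $\SO_8(\bR)$ (hence of $\SL_8(\bR)$). You have spelled out in more detail exactly what that one-line reduction means: existence of a factorisation $H = {}^t(a'n')(a'n')$ with $a' \in A$, $n' \in N$ via Iwasawa in $G_2$, followed by uniqueness of the upper-triangular Cholesky factor in $\SL_8$ to identify $(a',n')$ with the Gram--Schmidt output $(a_H,n_H)$.
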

\begin{proof}
The proof is essentially the same as for Lemma \ref{lem_Gram_Schmidt}, since we have chosen the embedding $G_2 \leq \SO_8$ so that the groups $A, N \leq G_2(\bR)$ are contained inside the corresponding subgroups of $\SO_8(\bR)$.
\end{proof}
We now discuss size reduction. Let $\omega$ denote the set of elements $n \in N$ such that $n_{1, 2}, n_{2, 3}, n_{1, 3}, n_{1, 4}, n_{1, 6}$, and $n_{1, 7}$ all lie in $[-1/2, 1/2]$.
\begin{proposition}\label{prop_size_reduction_G_2}
The set $\omega$ is an allowable fundamental set. 
\end{proposition}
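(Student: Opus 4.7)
The plan is to mimic the proof of the analogous statement for $\SO_{2g}$: for each $n \in N$, construct $\gamma_n \in \bN(\bZ)$ with $n\gamma_n \in \omega$ by a sequence of six right-multiplications by $u_\alpha(m) \in \bN(\bZ)$ for $\alpha \in \Phi^+$ and $m \in \bZ$, performed in order of increasing root height. Compactness of $\omega$ and the allowability condition $p(\omega) \subset [-1/2,1/2]^\Delta$ then follow as consequences.

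The starting point is to catalogue, for each positive root $\alpha$, the nonzero matrix entries of $X_\alpha$ in the basis $e_1, \dots, e_8$ of $\bO$. Because each $e_j$ is a $\bT$-weight vector with weight read off in \S \ref{sec_case_G_2}, and $X_\alpha$ shifts weights by $\alpha$, the nonzero entries of $X_\alpha$ are precisely the pairs $(i,j)$ with $\mathrm{wt}(e_i) = \mathrm{wt}(e_j) + \alpha$. A direct check gives: $X_{\alpha_1}$ has entries $(1,2),(3,4),(3,5),(4,6),(5,6),(7,8)$; $X_{\alpha_2}$ has $(2,3),(6,7)$; $X_{\alpha_1+\alpha_2}$ has $(1,3),(2,4),(2,5),(4,7),(5,7),(6,8)$; $X_{2\alpha_1+\alpha_2}$ has $(1,4),(1,5),(2,6),(3,7),(4,8),(5,8)$; $X_{3\alpha_1+\alpha_2}$ has $(1,6),(3,8)$; and $X_{3\alpha_1+2\alpha_2}$ has $(1,7),(2,8)$. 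Moreover $X_\alpha^2/2$ has at most one nonzero entry, occurring at $(3,6)$, $(2,7)$, $(1,8)$ for $\alpha = \alpha_1, \alpha_1+\alpha_2, 2\alpha_1+\alpha_2$ respectively, and $X_\alpha^3 = 0$. The six designated entries $(1,2),(2,3),(1,3),(1,4),(1,6),(1,7)$ each appear as the leading entry of exactly one root group, matched (in this order) to the positive roots listed in increasing order of height.

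The size-reduction procedure is then to process the positive roots in increasing height order, at each step choosing the unique $m \in \bZ$ which pushes the associated designated entry into $[-1/2,1/2)$. The crucial claim is that each such step leaves fixed the designated entries reduced in earlier steps: when we right-multiply $n$ by $u_\alpha(m)$, the entry $(i,j)$ of $n$ changes only if $j$ appears as a column index in the catalogue of $X_\alpha$ or $X_\alpha^2$ and $n_{i,k} \neq 0$ for some appropriate $k$, and an inspection shows that for each earlier designated entry $(i_0, j_0)$ and current root $\alpha$, $j_0$ either lies outside the affected columns or lies in a column where the relevant row of $n$ is zero by upper-triangular unipotency. Hence the procedure terminates after six steps with $n\gamma_n \in \omega$.

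Relative compactness of $\omega$ follows from observing that the six designated entries form a polynomial coordinate system on $N$: in the product coordinates $n = u_{\alpha_6}(t_6)\cdots u_{\alpha_1}(t_1)$, the same catalogue shows that the Jacobian of the map $(t_1, \dots, t_6) \mapsto (n_{1,2}, n_{2,3}, n_{1,3}, n_{1,4}, n_{1,6}, n_{1,7})$ is upper-triangular with diagonal entries in $\{\pm 1\}$, hence a polynomial isomorphism. For allowability, a direct computation in these product coordinates gives $n_{1,2} = \pm t_1$ and $n_{2,3} = \pm t_2$, so $p_{\alpha_i}(n) = \pm n_{(i,i+1)}$ for $i=1,2$; bounding these entries in $[-1/2,1/2]$ forces $p(n) \in [-1/2,1/2]^\Delta$. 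The main obstacle is the bookkeeping: the quadratic contributions from $X_{\alpha_1}^2, X_{\alpha_1+\alpha_2}^2, X_{2\alpha_1+\alpha_2}^2$ land at positions $(3,6), (2,7), (1,8)$ which are not designated, so they do not disturb the triangular structure, but this non-interference must be verified by explicit case check.
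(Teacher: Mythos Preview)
Your proposal is correct and follows essentially the same route as the paper's proof. The paper simply writes down the six explicit $8\times 8$ matrices $u_{\alpha_i}(t)$ and asserts that the usual column-operation process produces $\gamma_n$ with $n\gamma_n\in\omega$, then observes $p(n)=(n_{1,2},n_{2,3})$ for allowability; you derive the location of the nonzero entries of each $u_\alpha(t)$ from weight considerations and then run the same column-operation argument, adding a Jacobian computation to make compactness explicit. The underlying mechanism---height-ordered right multiplication by integral $u_\alpha(m)$, with non-interference of earlier designated entries checked via the upper-triangular structure---is identical.
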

\begin{proof}
We first show that for any $n \in N$, there exists $\gamma_n \in \bN(\bZ)$ such that $n \gamma_n \in \omega$. This can be seen using the usual `column operation' process, \emph{given} the explicit form of the groups $u_{\alpha}(t)$ corresponding to a choice of positive root vectors $X_\alpha$. Here are the explicit formulae with respect to one such choice:
\[  \Scale[0.7]{u_{\alpha_1}(t) = \left(
\begin{array}{cccccccc}
 1 & t & 0 & 0 & 0 & 0 & 0 & 0 \\
 0 & 1 & 0 & 0 & 0 & 0 & 0 & 0 \\
 0 & 0 & 1 & t & -t & t^2 & 0 & 0 \\
 0 & 0 & 0 & 1 & 0 & t & 0 & 0 \\
 0 & 0 & 0 & 0 & 1 & -t & 0 & 0 \\
 0 & 0 & 0 & 0 & 0 & 1 & 0 & 0 \\
 0 & 0 & 0 & 0 & 0 & 0 & 1 & -t \\
 0 & 0 & 0 & 0 & 0 & 0 & 0 & 1 \\
\end{array}
\right), \enspace
u_{\alpha_2}(t) = \left(
\begin{array}{cccccccc}
 1 & 0 & 0 & 0 & 0 & 0 & 0 & 0 \\
 0 & 1 & t & 0 & 0 & 0 & 0 & 0 \\
 0 & 0 & 1 & 0 & 0 & 0 & 0 & 0 \\
 0 & 0 & 0 & 1 & 0 & 0 & 0 & 0 \\
 0 & 0 & 0 & 0 & 1 & 0 & 0 & 0 \\
 0 & 0 & 0 & 0 & 0 & 1 & -t & 0 \\
 0 & 0 & 0 & 0 & 0 & 0 & 1 & 0 \\
 0 & 0 & 0 & 0 & 0 & 0 & 0 & 1 \\
\end{array}
\right),\enspace
u_{\alpha_3}(t) = \left(
\begin{array}{cccccccc}
 1 & 0 & t & 0 & 0 & 0 & 0 & 0 \\
 0 & 1 & 0 & -t & t & 0 & t^2 & 0 \\
 0 & 0 & 1 & 0 & 0 & 0 & 0 & 0 \\
 0 & 0 & 0 & 1 & 0 & 0 & -t & 0 \\
 0 & 0 & 0 & 0 & 1 & 0 & t & 0 \\
 0 & 0 & 0 & 0 & 0 & 1 & 0 & -t \\
 0 & 0 & 0 & 0 & 0 & 0 & 1 & 0 \\
 0 & 0 & 0 & 0 & 0 & 0 & 0 & 1 \\
\end{array}
\right), } \]
\[  \Scale[0.7]{ u_{\alpha_4}(t) = \left(
\begin{array}{cccccccc}
 1 & 0 & 0 & t & -t & 0 & 0 & t^2 \\
 0 & 1 & 0 & 0 & 0 & -t & 0 & 0 \\
 0 & 0 & 1 & 0 & 0 & 0 & t & 0 \\
 0 & 0 & 0 & 1 & 0 & 0 & 0 & t \\
 0 & 0 & 0 & 0 & 1 & 0 & 0 & -t \\
 0 & 0 & 0 & 0 & 0 & 1 & 0 & 0 \\
 0 & 0 & 0 & 0 & 0 & 0 & 1 & 0 \\
 0 & 0 & 0 & 0 & 0 & 0 & 0 & 1 \\
\end{array}
\right),\enspace
u_{\alpha_5}(t) = \left(
\begin{array}{cccccccc}
 1 & 0 & 0 & 0 & 0 & t & 0 & 0 \\
 0 & 1 & 0 & 0 & 0 & 0 & 0 & 0 \\
 0 & 0 & 1 & 0 & 0 & 0 & 0 & -t \\
 0 & 0 & 0 & 1 & 0 & 0 & 0 & 0 \\
 0 & 0 & 0 & 0 & 1 & 0 & 0 & 0 \\
 0 & 0 & 0 & 0 & 0 & 1 & 0 & 0 \\
 0 & 0 & 0 & 0 & 0 & 0 & 1 & 0 \\
 0 & 0 & 0 & 0 & 0 & 0 & 0 & 1 \\
\end{array}
\right),\enspace
u_{\alpha_6}(t) = \left(
\begin{array}{cccccccc}
 1 & 0 & 0 & 0 & 0 & 0 & t & 0 \\
 0 & 1 & 0 & 0 & 0 & 0 & 0 & -t \\
 0 & 0 & 1 & 0 & 0 & 0 & 0 & 0 \\
 0 & 0 & 0 & 1 & 0 & 0 & 0 & 0 \\
 0 & 0 & 0 & 0 & 1 & 0 & 0 & 0 \\
 0 & 0 & 0 & 0 & 0 & 1 & 0 & 0 \\
 0 & 0 & 0 & 0 & 0 & 0 & 1 & 0 \\
 0 & 0 & 0 & 0 & 0 & 0 & 0 & 1 \\
\end{array}
\right).} \] 
It remains to check that $p(\omega) \subset [-1/2, 1/2]^{\Delta}$. This is true because, with the same choice of root vectors, we have $p(n) = (n_{1, 2}, n_{2, 3})$.
 \end{proof}
 The simple reflections determined by the same choice of vectors $X_\alpha$ as in the proof of Proposition \ref{prop_size_reduction_G_2} are given by
 \[ \Scale[0.7]{ s_1 = \left(
\begin{array}{cccccccc}
 0 & 1 & 0 & 0 & 0 & 0 & 0 & 0 \\
 -1 & 0 & 0 & 0 & 0 & 0 & 0 & 0 \\
 0 & 0 & 0 & 0 & 0 & 1 & 0 & 0 \\
 0 & 0 & 0 & 0 & 1 & 0 & 0 & 0 \\
 0 & 0 & 0 & 1 & 0 & 0 & 0 & 0 \\
 0 & 0 & 1 & 0 & 0 & 0 & 0 & 0 \\
 0 & 0 & 0 & 0 & 0 & 0 & 0 & -1 \\
 0 & 0 & 0 & 0 & 0 & 0 & 1 & 0 \\
\end{array}
\right), 
\quad 
s_2 = \left(
\begin{array}{cccccccc}
 1 & 0 & 0 & 0 & 0 & 0 & 0 & 0 \\
 0 & 0 & 1 & 0 & 0 & 0 & 0 & 0 \\
 0 & -1 & 0 & 0 & 0 & 0 & 0 & 0 \\
 0 & 0 & 0 & 1 & 0 & 0 & 0 & 0 \\
 0 & 0 & 0 & 0 & 1 & 0 & 0 & 0 \\
 0 & 0 & 0 & 0 & 0 & 0 & -1 & 0 \\
 0 & 0 & 0 & 0 & 0 & 1 & 0 & 0 \\
 0 & 0 & 0 & 0 & 0 & 0 & 0 & 1 \\
\end{array}
\right). } \]
We are now ready to give a more explicit version of Algorithm \ref{alg_LLL_for_G} for the group $G_2$.
\begingroup
\begin{algorithm}
\caption{Lattice reduction for $G_2$.} 
\begin{flushleft}
\textbf{Input:} A constant $\delta \in (1/4, 1)$, and an inner product $H$ on $\bO_\bR$ compatible with $q$ and $\cdot$.

\textbf{Output:} An element $\gamma \in G_2(\bZ)$ such that ${}^t \gamma H \gamma$ corresponds to a $\delta$-reduced point of $X_{G_2}$ under the bijection of Proposition \ref{prop_interpretation_of_symmetric_space_G_2}.
\end{flushleft}
  \begin{algorithmic}[1]\label{alg_LLL_for_G_2}
  \STATE Using Gram--Schmidt, compute $a \in A, n \in N$ such that $H = {}^t (an) an$. Set $a' \leftarrow a$, $n' \leftarrow n$, $\gamma \leftarrow 1 \in G_2(\bZ)$. 
\STATE Execute RED($1$). 
\IF{$\alpha_1(a')^2 > (\delta - (n'_{1, 2})^2)^{-1}$}
\STATE Execute REFL($1$).
\STATE \textbf{go to} 2.
\ENDIF
\STATE Execute RED($2$). 
\IF{$\alpha_2(a')^2 > (\delta - (n'_{2, 3})^2)^{-1}$}
\STATE Execute REFL($2$).
\STATE \textbf{go to} 2.
\ELSE
\FOR{$i = 3$ \TO $6$}
\STATE Execute RED($i$).
\ENDFOR
\ENDIF
\RETURN $\gamma$.
\end{algorithmic}
\end{algorithm}
\begin{algorithm}
\caption{Subroutine RED($i, j$) for Algorithm \ref{alg_LLL_for_G_2}.}
\begin{algorithmic}
\STATE Let $(k, l)$ be the $i^\text{th}$ tuple in the list $(1, 2), (2, 3), (1, 3), (1, 4), (1, 6), (1, 7),$
and let $m$ be the unique integer such that $n'_{k, l} + m \in [-1/2, 1/2)$. Set $n' \leftarrow n' u_{\alpha_i}(-m)$, $\gamma \leftarrow \gamma u_{\alpha_i}(m)$. 
\end{algorithmic}
\end{algorithm}
\begin{algorithm}
\caption{Subroutine REFL($i$) for Algorithm \ref{alg_LLL_for_G_2}.}
\begin{algorithmic}
\STATE Let $(k, l)$ be the $i^\text{th}$ tuple in the list $(1, 2), (2, 3)$. Set $t \leftarrow n_{k, l}$, $m' \leftarrow u_{\alpha_i}(-t) n'$, $n' \leftarrow u_{\alpha_i}( (-t \alpha_i(a')^2 )/(1 + t^2 \alpha_i(a')^2) )s_i^{-1} m' s_i$, $a' \leftarrow \check{\alpha}_i(\sqrt{1 + t^2 \alpha_i(a')^2}) s_i(a')$, $\gamma \leftarrow \gamma s_i$.
\end{algorithmic}
\end{algorithm}
\endgroup

We conclude with a numerical example. We randomly generated an inner product $H$ on $\bO_\bR$ compatible with $q$ and $\cdot$, given to 3 decimal places in the basis $e_1, \dots, e_8$ by the matrix 
\[ \Scale[0.7]{ H =\left(
\begin{array}{cccccccc}
 677.821 & 742.548 & 658.571 & -370.423 & 370.423 & -52.366 & 856.644 & -685.136 \\
 742.548 & 918.869 & 682.461 & -550.939 & 550.939 & -158.762 & 1100.78 & -807.478 \\
 658.571 & 682.461 & 660.727 & -299.161 & 299.161 & -5.648 & 761.899 & -647.973 \\
 -370.423 & -550.939 & -299.161 & 410.997 & -409.997 & 177.778 & -703.984 & 448.569 \\
 370.423 & 550.939 & 299.161 & -409.997 & 410.997 & -177.778 & 703.984 & -448.569 \\
 -52.366 & -158.762 & -5.648 & 177.778 & -177.778 & 113.394 & -236.83 & 102.243 \\
 856.644 & 1100.78 & 761.899 & -703.984 & 703.984 & -236.83 & 1351.24 & -947.165 \\
 -685.136 & -807.478 & -647.973 & 448.569 & -448.569 & 102.243 & -947.165 & 725.913 \\
\end{array}
\right). } \]
Running the algorithm with $\delta = 0.9$ produces an element $\gamma \in G_2(\bZ) \leq \SO_8(\bZ)$, given by the matrix
\[ \Scale[0.7]{ \gamma = \left(
\begin{array}{cccccccc}
 -7 & 11 & -15 & -6 & 6 & 18 & 10 & -28 \\
 14 & -9 & 18 & 3 & -3 & -33 & -11 & 36 \\
 -6 & -4 & -1 & 5 & -5 & 11 & -3 & -4 \\
 10 & 0 & 8 & -3 & 4 & -21 & -1 & 18 \\
 -10 & 0 & -8 & 4 & -3 & 21 & 1 & -18 \\
 2 & -16 & 14 & 12 & -12 & -9 & -14 & 23 \\
 10 & -10 & 16 & 4 & -4 & -24 & -9 & 31 \\
 4 & -14 & 14 & 10 & -10 & -14 & -14 & 25 \\
\end{array}
\right). } \]
The image of $H$ under $\gamma$ is
\[ \Scale[0.7]{ {}^t \gamma H \gamma = \left(
\begin{array}{cccccccc}
 0.802 & 0.282 & 0.318 & -0.215 & 0.215 & -0.287 & 0.026 & 0.162 \\
 0.282 & 1.013 & -0.165 & -0.536 & 0.536 & -0.018 & 0.266 & 0.052 \\
 0.318 & -0.165 & 1.088 & 0.363 & -0.363 & -0.03 & -0.425 & 0.297 \\
 -0.215 & -0.536 & 0.363 & 1.398 & -0.398 & 0.426 & -0.66 & -0.227 \\
 0.215 & 0.536 & -0.363 & -0.398 & 1.398 & -0.426 & 0.66 & 0.227 \\
 -0.287 & -0.018 & -0.03 & 0.426 & -0.426 & 1.511 & 0.035 & -0.793 \\
 0.026 & 0.266 & -0.425 & -0.66 & 0.66 & 0.035 & 1.734 & -0.437 \\
 0.162 & 0.052 & 0.297 & -0.227 & 0.227 & -0.793 & -0.437 & 1.909 \\
\end{array}
\right).} \]

\section*{Appendix: size reduction for a general $\bG$}

We now return to the notation of \S \ref{s-algorithm}, so $\bG$ is an arbitrary split semisimple group. In this appendix,
we define coordinates on $N$, describe an allowable fundamental set $\omega \subset N$, and describe size reduction in this setting, i.e. given $n \in N$, we show how to find $\gamma \in \bN(\bZ)$ such that $n\gamma \in \omega$. 

Recall that we have fixed a basis vector $X_\al$ for each $\al \in \Delta$. We now choose a basis vector $X_\al$ for each $\al \in \Phi^+$. These basis vectors determine root-group homomorphisms $u_\al: \bG_a \to \bG$ for $\al \in \Phi^+$. We say that an ordering $\alpha_1, \dots, \alpha_r$ of the elements of $\Phi^+$ is \emph{good} if it satisfies the following property:
\begin{center}
Suppose $i, j, k \in \{1, \dots, r\}$ such that $\al_k = c_i\al_i + c_j\al_j$ for some integers $c_i, c_j \geq 1$. Then $k > \max\{i, j\}$. 
\end{center}
Note that good orderings exist. For example, given $\be = \sum_{\al \in \Delta} c_\al\al \in \Phi^+$, we define the height of $\be$ to be $\Ht(\be) = \sum_{\al \in \Delta} c_\al$. Then we can take $\alpha_1, \dots, \alpha_r$ to be any ordering of $\Phi^+$ with the property that if $i \leq j$ then $\Ht(\alpha_i) \leq \Ht(\alpha_j)$.
For another example, consider $\bG = \SL_{n}$ with its diagonal maximal torus $\bT$ 
and roots $\al_{i,j}$ as defined in \S \ref{subsec_setup_in_case_SL_g}. 
We claim that 
\[ \alpha_{1, 2}, \alpha_{2, 3}, \alpha_{1, 3}, \al_{3,4}, \al_{2, 4}, \al_{1, 4}, \dots, \alpha_{i-1, i}, \al_{i - 2, i}, \dots, \alpha_{1, i}, \dots, \alpha_{n-1, n}, \al_{n - 2, n} \dots, \alpha_{1, n} \]
is a good ordering. Indeed, suppose that $i < j$, that  $k < \ell$, and that $c, d \in \bZ$ are positive integers. If $c\al_{i, j} + d\al_{k, \ell} \in \Phi$, we must have $c = d = 1$, $j = k$, and $\al_{i, \ell} = \al_{i, j} + \al_{k, \ell}$. Thus $j < \ell$, so $\al_{i, j}$ appears before $\al_{i, \ell}$ in this ordering. And $i < k$, so $\al_{k, \ell}$ appears before $\al_{i, \ell}$ in this ordering.

For the rest of the appendix, fix a good ordering $\al_1, \dots, \al_r$ of the positive roots. By \cite[Corollary 2 to Lemma 18]{Steinberg}, every element $n \in N$ can be written uniquely in the form $n = u_{\al_1}(t_1)\dots u_{\al_r}(t_r)$ for some $t_1, \dots, t_r \in \bR$. We may think of $(t_1, \dots, t_r)$ as coordinates for $n$. Given such an $n \in N$, we write $t_i(n)$ for $t_i$ in these coordinates. Similarly, every $\gamma \in \bN(\bZ)$ may be written uniquely in the form $u_{\al_1}(m_1)\dots u_{\al_r}(m_r)$ for some $m_1, \dots, m_r \in \bZ$. 
Our choice of a good ordering has the following implications:

\begin{lemma}\label{lem_nilpotent_group}
Given $k \in \{1, \dots, r\}$, let $N_k = \{u_{\al_k}(t_k)u_{\al_{k + 1}}(t_{k + 1})\dots u_{\al_r}(t_r) \mid t_k, \dots, t_r \in \bR\}$.
\begin{enumerate} 
\item Each $N_k$ is a normal subgroup of $N$. 
\item We have $[N, N_k] \leq N_{k+1}$. 
\item If $1 \leq i \leq k$, then $t_i(n)$ depends only on $n \text{ mod }N_{k+1}$, and the map $u_{\alpha_1} \times \dots \times u_{\alpha_k} : \bR^k \to N / N_{k+1}$ is bijective.
\end{enumerate}
\end{lemma}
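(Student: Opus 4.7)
The plan is to use Chevalley's commutator formula (\cite[Lemma 15]{Steinberg}) as the main tool: for distinct positive roots $\alpha_a, \alpha_b$, the commutator $[u_{\alpha_a}(s), u_{\alpha_b}(t)]$ is a product of terms $u_\gamma(c_{ij} s^i t^j)$ over positive roots $\gamma = i\alpha_a + j\alpha_b$ with $i, j \geq 1$. By the goodness of the ordering, every such $\gamma$ equals $\alpha_\ell$ for some $\ell > \max(a, b)$, so
\[ [u_{\alpha_a}(s), u_{\alpha_b}(t)] \in N_{\max(a,b)+1}. \]
A first consequence, used throughout, is that each $N_k$ is itself a subgroup of $N$: rewriting a product of two elements of $N_k$ in Steinberg normal form requires commuting factors past each other, and the above shows that all correction terms land in some $N_\ell$ with $\ell > k$.

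I would then prove (1) and (2) together by downward induction on $k$. The base case $k = r$ is immediate: $N_r = u_{\alpha_r}(\bR)$ is abelian, and the commutator formula forces $[u_{\alpha_a}(s), u_{\alpha_r}(t)] \in N_{r+1} = \{e\}$ for $a < r$, so $N_r$ is central. For the inductive step, assume $N_{k+1}$ is normal in $N$ and $[N, N_{k+1}] \leq N_{k+2}$. Given $n \in N$ and $n' \in N_k$, write $n' = u_{\alpha_k}(t) n''$ with $n'' \in N_{k+1}$ and expand via the identity $[n, bc] = [n, b] \cdot b[n, c] b^{-1}$: the second factor lies in $N_{k+1}$ by the inductive hypothesis. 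For the first, expand $n$ in Steinberg form and apply $[ab, c] = a[b,c]a^{-1}[a,c]$ iteratively, reducing to basic commutators $[u_{\alpha_j}(s_j), u_{\alpha_k}(t)] \in N_{\max(j,k)+1} \subseteq N_{k+1}$. Normality of $N_{k+1}$ (from the inductive hypothesis) then traps the whole expression in $N_{k+1}$, so $[n, n'] \in N_{k+1}$; hence $nn'n^{-1} = [n, n']\, n' \in N_k$, which gives both (1) and (2) at level $k$.

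For (3), let $n = u_{\alpha_1}(s_1) \cdots u_{\alpha_r}(s_r)$ and $m \in N_{k+1}$. The product $u_{\alpha_{k+1}}(s_{k+1}) \cdots u_{\alpha_r}(s_r) \cdot m \in N_{k+1}$ admits a unique Steinberg-form expression $u_{\alpha_{k+1}}(s'_{k+1}) \cdots u_{\alpha_r}(s'_r)$, and substituting back gives
\[ nm = u_{\alpha_1}(s_1) \cdots u_{\alpha_k}(s_k) u_{\alpha_{k+1}}(s'_{k+1}) \cdots u_{\alpha_r}(s'_r), \]
whose first $k$ Steinberg coordinates coincide with those of $n$. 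This shows simultaneously that $t_i(n) = t_i(nm)$ for $i \leq k$ (so $t_i$ descends to $N/N_{k+1}$) and that the induced map $\bR^k \to N/N_{k+1}$ is injective; surjectivity is immediate by reading off $t_1(n), \dots, t_k(n)$ from the Steinberg form of any lift. The hard part is identifying the correct form of the inductive hypothesis in (2): once one recognises that the goodness of the ordering is designed precisely so that $[u_{\alpha_a}, u_{\alpha_b}] \in N_{\max(a,b)+1}$, the remainder is routine commutator-calculus bookkeeping.
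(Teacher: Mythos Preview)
Your proof is correct and rests on the same underlying idea as the paper's: the goodness of the ordering means that $\Phi_k = \{\alpha_i \mid i \geq k\}$ is closed under taking positive integer combinations that remain roots, so Chevalley's commutator formula traps all commutators in the right place.

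The execution differs in economy. The paper simply observes that each $\Phi_k$ is an \emph{ideal} in $\Phi^+$ in Steinberg's sense, and then cites \cite[Lemmas 16, 17]{Steinberg} as black boxes: Lemma 17 gives that $N_k$ is a subgroup with unique factorisation (yielding (3) immediately), and Lemma 16 gives normality; the commutator bound (2) is then a one-line consequence of \cite[Corollary to Lemma 15]{Steinberg}. You instead unwind these citations by hand, running a downward induction on $k$ and using commutator identities like $[n,bc] = [n,b]\,b[n,c]b^{-1}$ explicitly. This is more work but is self-contained and makes transparent exactly how the good ordering is used. One minor point: your argument that $N_k$ is a subgroup (``rewriting a product\dots requires commuting factors past each other'') is slightly circular as stated, since you use $N_{\ell}$ being a group for $\ell > k$ to absorb the correction terms; this is easily fixed by folding it into the same downward induction you already run for (1) and (2), or by citing \cite[Lemma 17]{Steinberg} once at the outset.
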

\begin{proof}
Let $\Phi_k = \{\al_i \mid i \geq k\}$. By the definition of a good ordering, we see that $\Phi_k$ is an ideal in $\Phi^+$ for all $k \in \{1, \dots, r\}$, in the sense of \cite[Chapter 3]{Steinberg}. By \cite[Lemma 17]{Steinberg}, we see that $N_k$ is the subgroup generated by the root groups $x_\al(\bR)$ for $\al \in \Phi_k$, and by \cite[Lemma 16]{Steinberg}, the subgroup $N_k$ is normal in $N$. The rest of the lemma then follows from commutator relations for root groups. More specifically, the second statement follows from \cite[Corollary to Lemma 15]{Steinberg}. 
The third statement follows from uniqueness of expression in \cite[Lemma 17]{Steinberg}. 
\end{proof}

\begin{remark}
Suppose the simple roots appear first in our ordering $\al_1, \dots, \al_r$ (this occurs, for example, in an ordering that respects height as mentioned above), and let $k = \# \Delta$. In this case 
 the group $N_+$ mentioned in \S \ref{s-reduced} is equal to $N_{k + 1}$. The map $p$ in that section sends an element $n$ to its first $k$ coordinates with respect to this ordering, and the map $p_\al$ gives the coordinate corresponding to $\al \in \Delta$. 
 \end{remark}

Let $\omega = \{ n \in N \mid \forall i \in \{1, \dots, r\}, t_i(n) \in [-1/2, 1/2) \}$. The next proposition will show that $\omega$ is an allowable fundamental set in $N$, and the proof will demonstrate the process of size reduction in this setting.

\begin{proposition}\label{prop_size_reduction}
Let $n \in N$. Then there is a unique element $\gamma_n \in \bN(\bZ)$ such that $n \gamma_n \in \omega$.
\end{proposition}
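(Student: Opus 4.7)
The plan is to construct $\gamma_n$ one coordinate at a time, exploiting the filtration $N = N_1 \supseteq N_2 \supseteq \cdots \supseteq N_{r+1} = 1$ provided by Lemma \ref{lem_nilpotent_group}. The key observation is that right multiplication by a root group element $u_{\alpha_i}(m)$ affects the coordinates in a very controlled way: it shifts the $i$-th coordinate by $m$ and leaves all earlier coordinates unchanged.

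More precisely, I would first establish the following identities: for $n \in N$, $i \in \{1, \dots, r\}$, and $m \in \bR$,
\begin{equation*}
 t_j(n \cdot u_{\alpha_i}(m)) = t_j(n) \text{ for all } j < i, \quad \text{and} \quad t_i(n \cdot u_{\alpha_i}(m)) = t_i(n) + m.
\end{equation*}
For the first identity, since $i \geq j + 1$ we have $u_{\alpha_i}(m) \in N_i \subseteq N_{j+1}$, so $n u_{\alpha_i}(m) \equiv n \pmod{N_{j+1}}$, and Lemma \ref{lem_nilpotent_group}(3) forces $t_j(n u_{\alpha_i}(m)) = t_j(n)$. For the second identity, I reduce modulo $N_{i+1}$: using that $u_{\alpha_i}$ is a homomorphism from $\bG_a$,
\begin{equation*}
n u_{\alpha_i}(m) \equiv u_{\alpha_1}(t_1(n)) \cdots u_{\alpha_{i-1}}(t_{i-1}(n))\, u_{\alpha_i}(t_i(n) + m) \pmod{N_{i+1}},
\end{equation*}
and the bijectivity in Lemma \ref{lem_nilpotent_group}(3) reads off the new $i$-th coordinate.

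For existence, set $n^{(0)} = n$ and, for $i = 1, \dots, r$ in turn, let $m_i$ be the unique integer with $t_i(n^{(i-1)}) + m_i \in [-1/2, 1/2)$, and put $n^{(i)} = n^{(i-1)} u_{\alpha_i}(m_i)$. The two identities above show that $t_i(n^{(i)}) \in [-1/2, 1/2)$ and that $t_j(n^{(i)}) = t_j(n^{(i-1)})$ for all $j < i$, so by induction $t_j(n^{(r)}) = t_j(n^{(j)}) \in [-1/2, 1/2)$ for every $j$. Hence $n^{(r)} \in \omega$, and one may take $\gamma_n = u_{\alpha_1}(m_1) \cdots u_{\alpha_r}(m_r) \in \bN(\bZ)$.

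For uniqueness, suppose $\gamma, \gamma' \in \bN(\bZ)$ satisfy $n\gamma, n\gamma' \in \omega$, and write $\gamma = u_{\alpha_1}(m_1) \cdots u_{\alpha_r}(m_r)$, $\gamma' = u_{\alpha_1}(m'_1) \cdots u_{\alpha_r}(m'_r)$ with $m_i, m'_i \in \bZ$. Iterating the identities above yields an expression $t_i(n\gamma) = F_i(n; m_1, \dots, m_{i-1}) + m_i$ for some function $F_i$ depending only on the arguments shown, and similarly for $\gamma'$. Arguing by induction on $i$, under the hypothesis $m_j = m'_j$ for all $j < i$, we have $t_i(n\gamma) - t_i(n\gamma') = m_i - m'_i$, an integer in $(-1, 1)$ and therefore zero. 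This completes the proof; I do not anticipate any real obstacle, since all the necessary structural content is already packaged in Lemma \ref{lem_nilpotent_group}.
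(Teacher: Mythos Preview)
Your proof is correct and follows essentially the same approach as the paper's: both construct $\gamma_n$ one coordinate at a time via the filtration $N = N_1 \supseteq \cdots \supseteq N_{r+1} = 1$, using Lemma~\ref{lem_nilpotent_group} to show that right multiplication by $u_{\alpha_i}(m)$ shifts $t_i$ by $m$ and leaves $t_j$ for $j<i$ unchanged, then handle uniqueness by the same induction. Your presentation is slightly cleaner in that you isolate the two key identities up front, but the argument is the same.
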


\begin{proof}
Let $n = u_{\al_1}(t_1)\dots u_{\al_r}(t_r)$. We will inductively choose integers $m_1, \dots, m_r$ such that $nu_{\al_1}(m_1)\dots u_{\al_r}(m_r) \in \omega$. First let $m_1$ be the unique integer such that $t_1 + m_1 \in [-1/2, 1/2)$. By Lemma \ref{lem_nilpotent_group}(2) $u_{\al_1}(m_1)$ commutes with $u_{\al_2}(t_2)\dots u_{\al_r}(t_r)$ modulo $N_2$, so $nu_{\al_1}(m_1)N_2 = u_{\al_1}(t_1)u_{\al_1}(m_1)N_2 = u_{\al_1}(t_1 + m_1)N_2$ in $N/N_2$. By Lemma \ref{lem_nilpotent_group}(3), we have $t_1(nu_{\al_1}(m_1)) = t_1 + m_1 \in [-1/2, 1/2)$.

For the induction step, assume that we have chosen $m_1, \dots, m_k \in \bZ$ such that $t_i(nu_{\al_1}(m_1)u_{\al_2}(m_2)\dots u_{\al_k}(m_k)) \in [-1/2, 1/2)$ for $i \in \{1, \dots, k\}$. Let $n' = nu_{\al_1}(m_1)u_{\al_2}(m_2)\dots u_{\al_k}(m_k)$, and write $n'$ as $u_{\al_1}(t_1')\dots u_{\al_r}(t_r')$. Let $m_{k + 1}$ be the unique integer such that $t_{k + 1}' + m_{k + 1} \in [-1/2, 1/2)$. Using the same logic as above, by Lemma \ref{lem_nilpotent_group}, we have $n'u_{\al_{k + 1}}(m_{k + 1})N_{k + 2} = u_{\al_1}(t_1')\dots u_{\al_{k + 1}}(t_{k + 1}')u_{\al_{k + 1}}(m_{k + 1})N_{k + 2}$ in $N/N_{k + 2}$, so $t_{k + 1}(n'u_{\al_{k + 1}}(m_{k + 1})) = t_{k + 1}' + m_{k + 1} \in [-1/2, 1/2)$, and $t_j(n'u_{\al_{k + 1}}(m_{k + 1})) = t_j(n')$ for all $j \in \{1, \dots, k\}$. 
In this way, we see that we may choose $m_1, \dots, m_r$ such that $nu_{\al_1}(m_1)\dots u_{\al_r}(m_r) \in \omega$. Thus we have proven the existence of $\gamma_n = u_{\al_1}(m_1)\dots u_{\al_r}(m_r)$.

Next we show uniqueness. Assume $\gamma, \gamma' \in \bN(\bZ)$ satisfy $n\gamma, n\gamma' \in \omega$. Write $\gamma = u_{\al_1}(m_1)\dots u_{\al_r}(m_r)$ and $\gamma' = u_{\al_1}(m_1')\dots u_{\al_r}(m_r')$ (where $m_i, m_i' \in \bZ$ for all $i$). We will show inductively that $m_i = m_i'$ for all $i$. Using the same logic as in the first part of the proof, we see that $n\gamma N_2 = u_{\al_1}(t_1 + m_1)N_2$ and $n\gamma'N_2 = u_{\al_1}(t_1 + m_1')N_2$ in $N/N_2$. By the definition of $\omega$, we must have $t_1 + m_1 \in [-1/2, 1/2)$ and $t_1 + m_1' \in [-1/2, 1/2)$, which implies $m_1 = m_1'$. 

Now assume that $m_j = m_j'$ for all $j \in \{1, \dots, k\}$. By Lemma \ref{lem_nilpotent_group}, the integer $m_{k+1}$ (resp. $m'_{k+1}$) must have the property that
\[ t_{k+1}( u_{\alpha_{1}}(t_{1}) \dots u_{k + 1}(t_{k + 1}) u_{\alpha_1}(m_1) \dots u_{\alpha_{k+1}}(m_{k+1})  ) \]
lies in $[-1/2, 1/2)$ (resp. the same with $m_{k+1}$ replaced by $m'_{k+1}$). This condition uniquely characterises $m_{k+1}$ and does not involve the $m_i$ with $i \geq k+2$, so forces $m_{k+1} = m'_{k+1}$. 
\end{proof}

Notice that to use the method of this proof to compute $n\gamma_n$ and $\gamma_n$ in coordinates, one must have explicit commutator formulas for root groups. These can be computed inductively (cf. \cite[Chapter 3]{Steinberg}). If $\bG$ is adjoint, then commutators are described explicitly in \cite[Section 5.2]{Carter}. 

\bibliographystyle{alpha}
\bibliography{LLL-split}

\end{document}